\theoremstyle{plain}
\newtheorem{theorem}{Theorem}
\newtheorem{lemma}{Lemma}
\theoremstyle{remark}
\newtheorem{remark}{Remark}
\newtheorem{example}{Example}
\def\R{\mathbb{R}}
\newcommand\A{{\mathfrak{a}}}
\newcommand\B{{\mathfrak{b}}}
\newcommand\g{{\mathfrak{g}}}
\newcommand\h{{\mathfrak{h}}}
\newcommand\m{{\mathfrak{m}}}
\newcommand{\xmark}{\ding{55}}
\newcommand\End{\operatorname{End}}
\newcommand\Ad{\operatorname{Ad}}
\newcommand\ad{\operatorname{ad}}
\newcommand\sd{\mathrm{d}}
\begin{document}

\title[] {Parallel and totally geodesic hypersurfaces of non-reductive homogeneous four-manifolds}

\author[G.~Calvaruso]{Giovanni Calvaruso}
\address{Universit\`a del Salento\\ Dipartimento di Matematica ``E.~De Giorgi''\\  Provinciale Lecce--Arnesano\\ 73100 Lecce\\ Italy} \email{giovanni.calvaruso@unile.it}

\author[R. Storm]{Reinier Storm}
\address{KU Leuven\\ Department of Mathematics\\ Celestijnenlaan 200B -- Box 2400\\ BE-3001 Leuven\\ Belgium} \email{reinier.storm@kuleuven.be}

\author[J. Van der Veken]{Joeri Van der Veken}
\address{KU Leuven\\ Department of Mathematics\\ Celestijnenlaan 200B -- Box 2400\\ BE-3001 Leuven\\ Belgium} \email{joeri.vanderveken@kuleuven.be}

\thanks{The first author is partially supported by funds of the University of Salento and GNSAGA. The second and the third author are supported by project 3E160361 of the KU Leuven Research Fund and the third author is supported by EOS project G0H4518N of the Belgian government.}

\begin{abstract} 
We classify totally geodesic and parallel hypersurfaces of four-dimensional non-reductive homogeneous pseudo-Riemannian manifolds.
\end{abstract}

\keywords{totally geodesic hypersurfaces, parallel hypersurfaces, non-reductive homogeneous spaces, pseudo-Riemannian metrics.}

\subjclass[2000]{53C42, 53C50, 53C35}

\maketitle


\section{Introduction}
A pseudo-Riemannian manifold $(M,g)$ is said to be \textit{homogeneous} if there exists a  connected Lie group of isometries acting transitively on it. 
It is well-known that any such manifold can be then realized as a coset space $M=G/H$, with $H$ denoting the isotropy subgroup at a point chosen as origin. 

We say that a homogeneous pseudo-Riemannian manifold $(M,g)$ is \emph{reductive} if there exists a connected transitive group of isometries $G\subseteq \mathrm{Iso}(M,g)$ for which $\g =\h \oplus \m$, where $\g$ is the Lie algebra of~$G$ and $\h$ is the Lie algebra of the isotropy subgroup $H\subseteq G$, such that $\m$ is an $\Ad(H)$-invariant subspace of ~$\g$. When $H$ is connected, the last condition is equivalent to the algebraic condition $[\h ,\m ]\subseteq \m$. In such a case, we call $\m$  a \emph{reductive complement} of $\h$ and $\h \oplus \m$ a {\em reductive decomposition}. A homogeneous pseudo-Riemannian manifold $(M,g)$ is said to be {\em non-reductive} if there exists a connected Lie group $G$ acting transitively on $(M,g)$, such that the corresponding Lie algebra $\g$ does not admit {\em any} reductive decomposition, that is, for any complement~$\m$ of $\h$ in $\g$ one has $[\h,\m]\nsubseteq \m$ (see  \cite{FR}). 

A homogeneous {\em Riemannian} manifold is necessarily reductive. Moreover, there do not exist any two- and three-dimensional non-reductive homogeneous pseudo-Riemannian manifolds \cite{FR}. On the other hand, there exist four-dimensional non-reductive homogeneous pseudo-Riemannian manifolds, both of Lorentzian and of neutral signature. After their classification, given in \cite{FR}, the geometry of these spaces has been investigated from different points of view, showing several interesting behaviours. In \cite{The}, invariant Yang-Mills connections over these spaces have been classified. More recently, their curvature properties \cite{CF}, Walker structures \cite{CZ}, global coordinates and homogeneous geodesics \cite{CFZ}, Ricci solitons \cite{CZ2}, symmetries \cite{CZ3} and conformal geometry \cite{GRR} were studied.
 
In this paper we address the problem of classifying parallel and, in particular, totally geodesic hypersurfaces of four-dimensional non-reductive homogeneous spaces. The search for parallel submanifolds of a given pseudo-Riemannian manifold is a natural problem, which enriches our knowledge and understanding of the geometry of the manifold itself. \textit{Parallel} submanifolds are those for which the second fundamental form, and hence all the extrinsic invariants derived from it, are covariantly constant. This condition is a natural generalization of \textit{totally geodesic} submanifolds, for which the second fundamental form vanishes identically. The latter condition means that the geodesics of the submanifold are also geodesics of the ambient space. 

It is worthwhile to recall that parallel submanifolds of a locally symmetric ambient space are locally symmetric, but, for general ambient spaces, parallel submanifolds need not be locally symmetric. For this reason, it is interesting to investigate parallel hypersurfaces of homogeneous spaces which are not locally symmetric. 

Parallel surfaces in three-dimensional homogeneous spaces have been intensively studied, showing several interesting behaviours (see for example \cite{CVdV1,CVdV2,CVdV4,CVdV5}). In higher dimensions, the condition for the existence of parallel hypersurfaces in non-symmetric ambient spaces becomes more problematic. For instance, among the four-dimensional generalized symmetric spaces, the only ones admitting such hypersurfaces are those of type $\textbf{C}$, which are in fact symmetric \cite{DdV}.

The results we obtained about the existence of parallel and totally geodesic hypersurfaces in non-reductive, not locally symmetric homogeneous pseudo-Riemannian four-manifolds are summarized in the following Table~I. In the first column we listed the six different types of non-reductive (not locally symmetric) homogeneous four-manifolds. The marks in the other columns show whether such a space admits totally geodesic hypersurfaces (the second fundamental form vanishes identically), \textit{proper parallel} hypersurfaces (the covariant derivative of the second fundamental form vanishes, but the second fundamental form itself does not) and hypersurfaces with a \textit{Codazzi second fundamental form} (the covariant derivative of the second fundamental form is totally symmetric). We may observe that different behaviours occur, and they are not influenced by the signature of the invariant metrics, so they are rather related to the different structures of the Lie algebra $\g$ of the group of isometries.
\medskip
\begin{center}
	\begin{tabular}{|l|c|c|c|}
		\hline
		Ambient space $\vphantom{A^{A^A}}$ & Codazzi second fundamental form & Proper parallel & Totally geodesic \\
		\hline
		\quad type $\textbf{A1}$ $\vphantom{A^{A^A}}$ & \xmark & \xmark & \xmark  \\[2 pt]
		\hline
		\quad type $\textbf{A2}$ $\vphantom{A^{A^A}}$ & \checkmark & \checkmark & \checkmark \\[2 pt]
		\hline
		\quad type $\textbf{A3}$ $\vphantom{A^{A^A}}$ & \checkmark & \checkmark & \checkmark \\[2 pt]\hline
		\quad type $\textbf{A4}$ $\vphantom{A^{A^A}}$ & \checkmark & \checkmark & \checkmark  \\[2 pt]\hline
		\quad type $\textbf{B1}$ $\vphantom{A^{A^A}}$ & \xmark & \xmark & \xmark  \\[2 pt]\hline
		\quad type $\textbf{B2}$ $\vphantom{A^{A^A}}$ & \checkmark & \checkmark & \checkmark  \\[2 pt]\hline
	\end{tabular} 
\\ $\vphantom{A^{A^A}}${\em Table I: parallel and totally geodesic hypersurfaces of non-reductive  \\ homogeneous pseudo-Riemannian $4$-manifolds}\end{center}
\bigskip

The paper is organized in the following way. In Section~\ref{sec:preliminaries}, we recall the classification of non-reductive homogeneous pseudo-Riemannian four-manifolds as well as the basics on parallel hypersurfaces. The needed information about the Levi-Civita connection and the curvature of non-reductive homogeneous pseudo-Riemannian four-manifolds is discussed in Section~\ref{sec:LC connection}, where vector fields are treated as functions from the overlying Lie group to its Lie algebra. In the remaining sections we classify parallel and totally geodesic hypersurfaces of each of the six types.


\section{Preliminaries} \label{sec:preliminaries}

\subsection{Non-reductive homogeneous four-manifolds}
\setcounter{equation}{0}

Four-dimensional non-reductive homogeneous pseudo-Riemannian manifolds $M=G/H$ were classified in \cite{FR} in terms of the corresponding Lie algebras $(\g,\h)$, with $\h \subseteq \g$. For a fixed Lie algebra pair from the classification in \cite{FR} one can compute all invariant pseudo-Riemannian metrics. We report below their description, obtained following the same argument already used in \cite{CF}, but with some different choices on the complement of the isotropy subalgebra, which make some computations easier. 

\medskip\noindent
{\bf Type A1.} Let $\g=\A _1$ be the decomposable $5$-dimensional Lie algebra $\mathfrak{sl}(2,\R) \oplus \mathfrak{s} (2)$, where $\mathfrak{s} (2)$ is the $2$-dimensional solvable algebra. There exists a basis $\{e_1,\ldots,e_5\}$ of $\g$, such that the non-zero brackets are
\[
[e_1,e_2]=2e_2, \quad [e_1,e_3]=-2e_3, \quad [e_2,e_3]=e_1, \quad [e_4,e_5]=e_4
\]
and the isotropy algebra is $\h={\rm span}\{h_1=e_3+e_4\}$. So we can take
\begin{equation}\label{eq:mA1}
\m= {\rm span}\{u_1=e_1,u_2=e_2,u_3=e_5,u_4=2 e_3\}.
\end{equation}
as a complement of $\h$ in $\g$. With respect to the basis $\{\theta^1,\ldots,\theta^4\}$, dual to $\{u_1,\ldots,u_4\}$, we have the following description of non-degenerate invariant metrics:
\begin{equation}\label{gA1}
\begin{array}{l} g=   a \,\left(\theta^1 \circ \theta^1 - \theta^1 \circ \theta^3+2 \theta^2 \circ \theta^4\right) +b\, \theta^2 \circ \theta^2 + 2c \, \theta^2 \circ \theta^3+d \, \theta^3 \circ \theta^3,  \quad a ( a -4d)\neq 0,
\end{array}
\end{equation}
where $a$, $b$, $c$ and $d$ are real constants and $\circ$ denotes the symmetric product of $1$-forms, i.e., $\theta^i \circ \theta^j = \frac{1}{2}(\theta^i\otimes\theta^j + \theta^j\otimes\theta^i)$. Depending on the sign of
 $a ( a -4d)\neq 0$, the metric is either Lorentzian or it has neutral signature.

\medskip\noindent
{\bf Type A2.} Let $\g =\A _2$ be the one-parameter family of $5$-dimensional Lie algebras $A_{5,30}$ from \cite{PSW}. There exists a basis $\{e_1,\ldots,e_5\}$ of $\g$, such that the non-zero brackets are
\[
\begin{array}{lll}
[e_1,e_5]=(\kappa+1)e_1, \quad & [e_2,e_4]=e_1, \quad & [e_2,e_5]=\kappa e_2, \\[2 pt]
[e_3,e_4]=e_2, \quad & [e_3,e_5]=(\kappa-1) e_3, \quad  & [e_4,e_5]=e_4
\end{array}
\]
for a parameter $\kappa \in \R$ and the isotropy algebra is $\h={\rm span}\{h_1=e_4\}$. Hence, we can take
\begin{equation}\label{eq:mA2}
\m={\rm span}\{u_1=e_1,u_2=e_2,u_3=e_3,u_4=e_5\}
\end{equation}
and we find that the non-degenerate invariant metrics are described by
\begin{equation}\label{gA2}
\begin{array}{l} g=a\, \left(-2\theta^1 \circ \theta^3+\theta^2 \circ \theta^2 \right)+b\, \theta^3 \circ \theta^3 +2c\, \theta^3 \circ \theta^4 +d\, \theta^4 \circ \theta^4, \quad ad \neq 0,
\end{array}
\end{equation}
where $a$, $b$, $c$ and $d$ are real constants. Depending on the sign of
 $ad$, the metric is either Lorentzian or it has neutral signature.

\medskip\noindent
{\bf Type A3.} Let $\g=\A _3$ be one of the $5$-dimensional Lie algebras $A_{5,37}$ or $A_{5,36}$ from \cite{PSW}. There exists a basis $\{e_1,\ldots,e_5\}$ of $\g$, such that the non-zero brackets are
\[
\begin{array}{llllll}
[e_1,e_4]=2e_1, & [e_2,e_3]=e_1, & [e_2,e_4]=e_2,  &
[e_2,e_5]=-\eta e_3,  & [e_3,e_4]=e_3, & [e_3,e_5]=e_2 ,
\end{array}
\]
with $\eta =1$ for $A_{5,37}$ and $\eta =-1$ for $A_{5,36}$, and the isotropy algebra is $\h={\rm span}\{h_1=e_3\}$. Thus, we can take
\begin{equation}\label{eq:mA3}
\m={\rm span}\{u_1=e_1,u_2=e_2+e_3,u_3=e_4,u_4=e_5\}
\end{equation}
and the non-degenerate invariant metrics are
\begin{equation}\label{gA3}
\begin{array}{l} g=a\, \left(2\theta^1 \circ \theta^4+\theta^2 \circ \theta^2 \right)+b\, \theta^3 \circ \theta^3 +2c\, \theta^3 \circ \theta^4 +d\, \theta^4 \circ \theta^4, \quad ab \neq 0,
\end{array}
\end{equation}
where $a$, $b$, $c$ and $d$ are real constants. Depending on the sign of
 $ab$, the metric is either Lorentzian or it has neutral signature.


\medskip\noindent
{\bf Types A4 and B2.} Let $\g=\A _4$ be the $6$-dimensional Schroedinger Lie algebra  $\mathfrak{sl}(2,\R) \ltimes \mathfrak{n} (3)$, where $\mathfrak{n} (3)$ is the $3$-dimensional Heisenberg algebra. There exists a basis $\{e_1,\ldots,e_6\}$ of $\g$, such that the non-zero brackets are
\[
\begin{array}{llll}
[e_1,e_2]=2e_2, \quad & [e_1,e_3]=-2e_3, \quad & [e_2,e_3]=e_1, \quad & [e_1,e_4]=e_4, \\[2 pt]
[e_1,e_5]=-e_5, \quad & [e_2,e_5]=e_4, \quad  & [e_3,e_4]=e_5 , \quad & [e_4,e_5]=e_6.
\end{array}
\]
For type \textbf{A4}, the isotropy algebra is $\h={\rm span}\{h_1=e_3+e_6, \, h_2=e_5\}$ and for type \textbf{B2} it is $\h={\rm span}\{h_1={e_3 - e_6}, \, h_2={e_5}\}$. Therefore, in both cases, we can take
\begin{equation}\label{eq:mA4&B2}
\m={\rm span}\{u_1 = e_1, u_2 = e_2, u_3 = -2e_6, u_4 = e_4\}.
\end{equation}
The non-degenerate invariant metrics are given by
\begin{equation}\label{gA4}
\begin{array}{l} g=a \left(\eta \, \theta^1 \circ \theta^1+2 \, \theta^2 \circ \theta^3+\frac 12 \theta^4 \circ \theta ^4 \right)+b\, \theta^2 \circ \theta^2, \quad a \neq 0,
\end{array} \end{equation}
where $a$ and $b$ are real constants, $\eta = 1$ for type \textbf{A4} and $\eta = -1$ for type \textbf{B2}. Remark that the metric has Lorentzian signature for $\eta = 1$, whereas it has neutral signature for $\eta = -1$. 

\medskip\noindent
{\bf Type B1.} Let $\g =\B_1$ be the $5$-dimensional Lie algebra  $\mathfrak{sl}(2,\R) \ltimes \R ^2 $. Then there exists a basis $\{e_1,\ldots,e_5\}$ for which the non-zero brackets are
\[
\begin{array}{llll}
[e_1,e_2]=2e_2, \quad & [e_1,e_3]=-2e_3, \quad & [e_2,e_3]=e_1, \quad & [e_1,e_4]=e_4, \\[2 pt]
[e_1,e_5]=-e_5, \quad & [e_2,e_5]=e_4, \quad  & [e_3,e_4]=e_5  \quad &
\end{array}
\]
and the isotropy algebra is $\h={\rm span}\{h_1=e_3\}$. Taking
\begin{equation}\label{eq:mB1}
\m= {\rm span}\{u_1=e_1,u_2=e_2,u_3=e_4,u_4=e_5\},
\end{equation}
we find that the non-degenerate invariant metrics are
\begin{equation}\label{gB1}
\begin{array}{l} g=a \, \left(2 \theta^1 \circ \theta^3+ 2\theta^2 \circ \theta^4 \right)+b \, \theta^2 \circ \theta^2 +2 c\, \theta^2 \circ \theta^3 +d\, \theta^3 \circ \theta^3, \quad a \neq 0,
\end{array}\end{equation}
where $a$, $b$, $c$ and $d$ are real constants. The metric has neutral signature.

\medskip

The classification of non-reductive homogeneous four-manifolds, given in \cite{FR}, now translates to the following.

\begin{theorem} \label{theo:classnonred}
Let $(M, g)$ be a non-reductive, not locally symmetric homogeneous pseudo-Rieman\-nian four-manifold. If $\g$ is the isometry algebra and $\h$ its isotropy subalgebra, then the Lie algebra pair $(\g,\h)$ is isomorphic to one in the above list: $\mathbf{A1}$, $\mathbf{A2}$, $\mathbf{A3}$, $\mathbf{A4}$, $\mathbf{B1}$ or $\mathbf{B2}$. Conversely, for every Lie algebra pair $(\g,\h)$ in this list there exists a non-reductive homogeneous pseudo-Riemannian four-manifold with isometry algebra $\g$.
\end{theorem}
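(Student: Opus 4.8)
The plan is to obtain the statement from the classification of Fels and Renner \cite{FR}, reorganized in terms of the explicit presentations fixed above. The genuinely hard part, the proof that six families exhaust all non-reductive, not locally symmetric homogeneous pseudo-Riemannian four-manifolds up to isomorphism of pairs, is carried by \cite{FR}; our remaining task is to match their list to the presentations $\mathbf{A1}$--$\mathbf{B2}$ and to settle the converse. I would therefore begin by recalling that \cite{FR} classifies, up to equivalence, all effective pairs $(\g,\h)$ with $\dim\g-\dim\h=4$ that admit an invariant non-degenerate metric and are non-reductive, here taking $\g$ to be the full isometry algebra; discarding the pairs whose associated spaces are locally symmetric leaves exactly six isomorphism classes.

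For the forward direction I would, for each of the abstract Lie algebras occurring in the list, namely $\mathfrak{sl}(2,\R)\oplus\mathfrak{s}(2)$, the Schroedinger algebra $\mathfrak{sl}(2,\R)\ltimes\mathfrak{n}(3)$, the algebra $\mathfrak{sl}(2,\R)\ltimes\R^2$, and the solvable algebras $A_{5,30}$, $A_{5,36}$, $A_{5,37}$ of \cite{PSW}, exhibit the basis $\{e_1,\ldots\}$ with the stated brackets and check that the displayed $\h$ is closed under the bracket, so that $(\g,\h)$ is a well-defined pair, and that $\m$ is a genuine vector-space complement. Matching each Fels--Renner pair to one of ours then amounts to producing an explicit Lie algebra automorphism of $\g$ carrying their isotropy line (or plane) onto the span of the $h_i$ listed above; this is a routine but strictly case-by-case linear-algebra verification.

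For the converse I would check three things for each presentation. First, the existence of invariant metrics: since $H$ is connected, invariance reduces to $\ad(\h)$-invariance, so I would solve the linear system expressing that a symmetric bilinear form on $\m$ is annihilated by the (transposed) action of each $\ad(h_i)$, and read off that the solution space is precisely the family \eqref{gA1}--\eqref{gB1}, with the stated inequalities $a(a-4d)\neq0$, $ad\neq0$, $ab\neq0$, $a\neq0$ guaranteeing non-degeneracy. Second, genuine non-reductivity: I would parametrize an arbitrary complement $\m'$ obtained from $\m$ by adding multiples of the $h_i$ and show that $[\h,\m']\subseteq\m'$ forces an inconsistent system in those parameters, so that no reductive complement exists; along the way I would confirm that $\g$ is the full isometry algebra, so that no larger transitive group makes the space reductive. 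Third, non-local-symmetry: I would exhibit a nonzero component of $\nabla R$, or simply invoke the curvature computations of \cite{CF}, to exclude the locally symmetric case.

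The main obstacle I anticipate is not the computations themselves but the matching step in the forward direction: \cite{FR} and \cite{PSW} normalize the isotropy and the structure constants differently from the choices above, so one must ensure that the transformation used to pass from their normal form to the present presentation is an honest automorphism of $\g$ preserving $\h$ up to conjugacy, rather than a change of basis that silently alters the bracket relations. Once this dictionary is fixed, the invariant-metric computation and the non-reductivity check reduce to finite linear problems in each of the six cases.
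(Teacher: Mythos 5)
Your proposal is correct and follows essentially the same route as the paper: the classification itself is outsourced to the Fels--Renner paper \cite{FR}, the six presentations $\mathbf{A1}$--$\mathbf{B2}$ are obtained by translating their list (with the locally symmetric types $\mathbf{A5}$ and $\mathbf{B3}$ discarded, cf.\ Remark~1), and the invariant metrics are computed from $\ad(\h)$-invariance on the chosen complement as in \cite{CF}. The paper treats the theorem as a restatement requiring no further proof, so your plan simply makes explicit the routine matching and verification steps that the authors leave implicit.
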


\begin{remark}
The list of spaces in \cite{FR} also contains a type \textbf{A5} and a type \textbf{B3}. However, it is already mentioned in \cite{FR} that spaces of type \textbf{A5} have constant sectional curvature (in particular, they are locally symmetric) and  spaces of type \textbf{B3} are locally symmetric. Therefore, we omitted them.
\end{remark}

\begin{remark}\label{rem:symmetric cases}
In the list above all invariant metrics are given. However, for certain parameter values $a$, $b$, $c$ and $d$, the invariant metric is locally symmetric. The locally symmetric examples can be easily deduced case by case using the description of the Levi-Civita connection and of the curvature of the invariant metrics reported in Section \ref{sec:LC connection}. We will now list all parameter values for which the invariant pseudo-Riemannian metric is locally symmetric.
\begin{itemize}
\item A space of type \textbf{A1} is locally symmetric if and only if $b=0$. It never has constant sectional curvature.
\item A space of type \textbf{A2} is locally symmetric if and only if $b=0$, in which case it has constant sectional curvature.
\item A space of type \textbf{A3} is locally symmetric if and only if $d+\eta b=0$, in which case it has constant sectional curvature.
\item A space of type \textbf{A4} or \textbf{B2} is locally symmetric if and only if $b=0$, in which case it has constant sectional curvature.
\item A space of type \textbf{B1} is locally symmetric if and only if $c^2-bd=0$ and it has constant sectional curvature if and only if $b=c=d=0$.
\end{itemize}
In what follows we are interested in non-reductive, not locally symmetric spaces and hence we will always assume that none of the above conditions hold.
\end{remark}

	\begin{remark}\label{rem:local lie group}
	It may be observed that for all of the pseudo-Riemannian spaces in Theorem~\ref{theo:classnonred}, except those of type \textbf{A3} with $\eta=1$, the complement $\m$ of $\h$  we choose is a subalgebra of $\g$. This implies that each of them is locally isometric to some Lie group $M_0$ with left invariant metric, which is a discrete quotient of the simply connected Lie group corresponding to $\m$.
	We explicitly note that this local property, just like the local symmetry of an invariant metric, does not contradict in any way the homogeneous space being non-reductive.

		We report in the following Table II the non-vanishing Lie brackets of $\m$ in the different cases, as deduced from the above description, and the simply connected models for the corresponding Lie groups $M_0$ locally isometric to these homogeneous non-reductive spaces 
		(with $\widetilde{SL}(2,\mathbb{R})$, $E(1,1)$ and $H_3$ denoting the universal covering of $SL(2,\mathbb{R})$, the group of rigid motions of the Minkowski plane and the Heisenberg group, respectively).
		
\medskip
\begin{center}
	\begin{tabular}{|l|c|c|c|}
		\hline
		Type $\vphantom{A^{A^A}}$ & Lie brackets of $\m$ &  Model for $M_0$\\
		\hline
		\quad $\textbf{A1}$ $\vphantom{A^{A^{A^A}}}$ & $[u_1,u_2]=2u_2,\; [u_1,u_4]=-2u_4,\; [u_2,u_4]=2u_1$ & $\mathbb{R} \times \widetilde{SL}(2,\mathbb{R})$  \\[2 pt]
		\hline
		\quad  $\textbf{A2}$ $\vphantom{A^{A^A}}$ & 
		$ [u_1,u_4]=(k+1)u_1, \; [u_2,u_4]=k u_2,\;  [u_3,u_4]=(k-1)u_3 $ 
		& $\mathbb{R} \ltimes \mathbb{R}^3$ \\[2 pt]
		\hline
		\quad $\textbf{A3}$ ($\eta=-1$) $\vphantom{A^{A^A}}$ & $[u_1,u_3]=2u_1,\; [u_2,u_3]=u_2,\; [u_2,u_4]=u_2$ & $\mathbb{R} \ltimes E(1,1)$  \\[2 pt]\hline
		\quad  $\textbf{A4}, \textbf{B2}$ $\vphantom{A^{A^A}}$ & $[u_1,u_2]=2u_2,\; [u_1,u_4]=u_4 $ & $\mathbb{R} \ltimes \mathbb{R}^3$   \\[2 pt]\hline
		\quad $\textbf{B1}$ $\vphantom{A^{A^A}}$ & 
		$ [u_1,u_2]=2u_2, \; [u_1,u_3]=u_3,\; [u_1,u_4]=-u_4, \;  [u_2,u_4]=u_3$ 
& $\mathbb{R} \ltimes H_3$   \\[2 pt]\hline
	\end{tabular} 
	
	\nopagebreak $\vphantom{A^{A^A}}${\em Table II: Lie groups locally isometric to non-reductive  \\ \nopagebreak homogeneous pseudo-Riemannian $4$-manifolds}\end{center}

\medskip
For example, for the Lie algebra $\m$ for type $\textbf{A1}$, $u_3$ does not occur in the nonvanishing Lie brackets and the 
three-dimensional Lie algebra $\mathfrak{k}=\mathrm{span}\{u_1,u_2,u_4\}$ satisfies $[\mathfrak{k},\mathfrak{k}]=\mathfrak{k}$, with $\ad _{u_1}$ having two distinct real eigenvalues (besides $0$), so that $\mathfrak{k}= \mathfrak{sl}(2,\R)$, $\m= \mathbb{R}\cdot u_4 \oplus \mathfrak{sl}(2,\R)$ and hence, $M_0$ is modelled on 
$\mathbb{R} \times \widetilde{SL}(2,\mathbb{R})$. For the Lie algebra $\m$ for type $\textbf{A2}$, $u_4$ acts as a derivation on the three-dimensional Lie algebra $\mathfrak{k}=\mathrm{span}\{u_1,u_2,u_3\}$ and $[\mathfrak{k},\mathfrak{k}]=0$, whence $M_0$ is modelled on $\mathbb{R} \ltimes \mathbb{R}^3$. The remaining cases are identified by similar arguments, using the well known classification of three-dimensional real Lie algebras.

	 The properties for a hypersurface to be Codazzi, parallel or totally geodesic are all local. Therefore, for all intended purposes  in all these cases we could work on the Lie group $M_0$. 
		However, in Section~\ref{sec:LC connection} we develop a method which allows us, at least in principle, to investigate all cases without assuming that the ambient space is a Lie group, interpreting vector fields on a homogeneous manifold in terms of corresponding equivariant functions.
	\end{remark}


\subsection{On totally geodesic and parallel hypersurfaces}
Let $F :\Sigma ^n \to M^{n+1}$ be an isometric immersion of pseudo-Riemannian manifolds. We denote both metrics by $\langle \cdot\,,\cdot \rangle$. Let $\xi$ be a normal vector field along the hypersurface, with $\langle \xi,\xi \rangle = \varepsilon \in \{-1,1\}$. Denote by $\nabla ^{\Sigma}$ and $\nabla$ the Levi-Civita connections of $\Sigma$ and $M$ respectively and let $X$, $Y$, $Z$ and $W$ be vector fields on $\Sigma$. We will always identify vector fields on $\Sigma$ with their images under $dF$. The well-known {\em formulas of Gauss and Weingarten},
\begin{align} 
& \nabla_X Y = \nabla ^\Sigma _X Y + h(X,Y) \xi, \label{fG} \\
& \nabla_X \xi = -SX,
\end{align}
define the \emph{second fundamental form} $h$ and the \emph{shape operator} $S$ of the immersion, which are symmetric $(0,2)$- and $(1,1)$-tensor fields on $\Sigma$ respectively, related by $\langle SX,Y \rangle = h(X,Y)$.

A hypersurface is said to be \emph{totally geodesic} if $h=0$ or, equivalently, $S=0$. Remark that a hypersurface is totally geodesic if every geodesic of the hypersurface is also a geodesic of the ambient space. The prime examples are Euclidean subspaces of a Euclidean space.

Consider now the covariant derivatives $\nabla^\Sigma h$ and $\nabla^{\Sigma}S$ of the second fundamental form and the shape operator, given by
\begin{align*} 
& (\nabla^\Sigma h)(X,Y,Z) = X(h(Y,Z)) - h(\nabla^\Sigma _XY,Z) - h(Y,\nabla ^\Sigma _XZ), \\
& (\nabla^\Sigma S)(X,Y) = \nabla_X SY - S \nabla_X Y.
\end{align*} 
The hypersurface is said to be \emph{parallel}, or to have \emph{parallel second fundamental form}, if and only if $\nabla^\Sigma h = 0$ or, equivalently, $\nabla^{\Sigma}S=0$. Obviously, totally geodesic hypersurfaces are special cases of parallel hypersurfaces. We refer to a hypersurface which is parallel, but not totally geodesic, as being \textit{proper parallel}.

Denote by $R^\Sigma$ and $R$ the Riemann-Christoffel curvature tensors of $\Sigma$ and $M$ respectively. The {\em equations of Gauss and Codazzi}, which follow from \eqref{fG} by direct calculation, respectively read
\begin{align}
& \langle R (X,Y)Z,W \rangle = \langle R^\Sigma (X,Y)Z,W \rangle + \varepsilon \left( h(X,Z)h(Y,W) - h(X,W)h(Y,Z) \right), \label{eG} \\
& \varepsilon \langle R (X,Y)Z,\xi \rangle = (\nabla^\Sigma h)(X,Y,Z) - (\nabla^\Sigma h)(Y,X,Z). \label{eC}
\end{align}
Throughout this paper, we use the sign convention $R(X,Y)Z = \nabla_X \nabla_Y Z - \nabla_Y \nabla_X Z -\nabla_{[X,Y]}Z$. 

The hypersurface is said to have a {\em Codazzi second fundamental form} if $\nabla^{\Sigma} h$ is symmetric in its three arguments. Clearly, by equation~\eqref{eC}, this is equivalent to requiring that $R(X,Y)\xi=0$ for all vector fields $X$ and $Y$ on $\Sigma$. In particular, totally geodesic and parallel hypersurfaces have a Codazzi second fundamental form.

\section{Levi-Civita connection and curvature \label{sec:LC connection}}

In this section we first briefly recall how invariant connections on homogeneous spaces are described. {All the theory we recall here, for which we may also refer to  \cite{KobayashiNomizu1963, KobayashiNomizu1969}, needs not assume the homogeneous space being reductive. Next, we explain how vector fields on a homogeneous manifold correspond to certain equivariant functions and describe how an invariant connection acts on these functions. 

Let $M = G/H$ be a homogeneous manifold, let $\g$ be the Lie algebra of $G$ and let $\h$ be the Lie algebra of $H$. For  $X\in \g$, we define a vector field $\overline{X}$ on $M$, generated by the left action of $G$ on $M$ as follows:
\begin{equation*}
\overline{X}_{gH} = \left.\frac{\sd}{d t}\right|_{t=0} \exp(-tX)gH \in T_{gH}M
\end{equation*}
for all $gH \in M$. Note that $\overline{[X,Y]} = [\overline{X},\overline{Y}]$ for all $X,Y\in \g$. Moreover, if $e$ is the identity element of $G$, these vector fields induce a linear map $\g \to T_{eH}M: X \mapsto \overline{X}_{eH}$ with kernel $\h$. Hence, for dimensional reasons, if $\m$ is a complement of $\h$ in $\g$ (not necessarily reductive), this map restricts to a linear isomorphism between vector spaces  $\m$ and $T_{eH}M$. From now on, we will implicitly identify  $\m$ and $T_{eH}M$ under this isomorphism.

Given a $G$-invariant connection $\nabla$ on $M =G/H$, the $\Lambda$-map of the connection is a linear map $\Lambda:\g\to \End(TM)$ defined by
\begin{equation*}
\Lambda(X) = \mathcal{L}_{\overline{X}} - \nabla_{\overline{X}},
\end{equation*}
where $\mathcal{L}_{\overline{X}}$ denotes the Lie derivative with respect to $\overline{X}$.
Let $\lambda:\g\to \End(\m)$ be the evaluation of $\Lambda$ at $eH$, i.e., 
\[
\lambda(X) = \Lambda(X)_{eH}\in \End(T_{eH}M) \cong \End(\m).
\]
The map $\lambda$ is an $H$-equivariant map and satisfies $\lambda(h) = \ad(h)$ for all $h\in \h$. Moreover, $\lambda$ completely determines $\Lambda$ through the identity $$\Lambda(X)_{gH} = dL_g\circ \lambda(\Ad(g)^{-1} X)\circ dL_g^{-1},$$ for every $X\in \g$. The curvature tensor $R$ of an invariant connection is still invariant under the action of $G$. Therefore, evaluating $R$ at $eH$ completely determines $R$. Moreover, $R_{eH}$ can easily be computed by 
\[
R_{eH}(X,Y) = [\lambda(X),\lambda(Y)] - \lambda([X,Y])
\] 
for all $X,Y \in \m$. For the Levi-Civita connection of a $G$-invariant metric, the map $\lambda$ is given by
\begin{equation*}
\langle\lambda(X)Y,Z\rangle = \frac{1}{2}\left(\langle[X,Y]_\m \, ,Z_\m \, \rangle + \langle [Z,X]_\m \, ,Y_\m  \, \rangle + \langle [Z,Y]_\m \, ,X_\m \, \rangle \right),
\end{equation*}
where $X,Y,Z\in \g$, an index $\m$ denotes the projection onto $\m$ along $\h$ and $\langle \cdot, \cdot\rangle$ denotes the inner product on $\m$ induced by the $G$-invariant metric on $M$ via the identification $\m \cong T_{eH}M$. Conversely, given an $H$-equivariant linear map $\lambda:\g\to\End(\m)$, such that $\lambda(h) = \ad(h)$, then $\lambda$ defines a homogeneous connection on $M=G/H$.

We now explain how we can identify vector fields on a homogeneous space $M=G/H$ with functions from $G$ to $\m$. Given a vector field $X\in \Gamma(TM)$,  taking into account the above identification $\m \cong T_{eH}M$, we define $\hat{X}$ by
\[
\begin{array}{rcl}
\hat X: \; G & \to & \m \\[4pt]
g & \mapsto  & \hat{X}(g) = dL_g^{-1} X_{gH}.
\end{array}
\]
Note that the function $\hat{X}$ is $H$-equivariant, i.e., $\hat{X}(gh) = (\Ad(h)^{-1} X)_\m \, $. Conversely, given an $H$-equivariant function $\hat{X}:G \to \m$, the above formula defines a vector field on $M$. From now on we will identify vector fields on $M$ with these $H$-equivariant functions and drop the hat from the notation. In particular, $X(g)$ will denote the value of the function corresponding to the vector field~$X$ at $g$ and $X_{gH}$ denotes the value of the vector field at $gH$. 

Let $\nabla$ be an invariant connection on $M=G/H$ with corresponding map $\lambda:\g\to \End(\m)$. Let $\underline{\m}=G\times \m$ be the trivial vector bundle and denote by $\{u_1,\ldots,u_n\}$ a basis for $\m$. The \emph{absolute differential} corresponding to $\nabla$, is the operator defined by  
\[
\begin{array}{rrcl}
D: & \Gamma(TG)\times \Gamma(\underline{\m}) &\to & \Gamma(\underline{\m}) \\[6pt] 
& \left( X, \, Y=\sum_{i=1}^n y_i u_i \right) & \mapsto &
D_X Y = \sum_{i=1}^n X(y_i) u_i + (\lambda\circ X)Y.
\end{array}
\]
If $X$ and $Y$ are vector fields on $M$, we can interpret them as $H$-equivariant maps from $G$ to $\m \subseteq \g$ as explained above and hence as elements of $\Gamma(\underline{\m})$ or $\Gamma(TG)$. Thus it makes sense to compute $D_XY$ and we find $D_X Y = \nabla_X Y$. The last equation can  be interpreted either as an equation of vector fields or as an equation of functions. 

The advantage of working with the absolute derivative is that we can do all computations on functions $G\to \m$ which need not be equivariant. For example, if $F:\Sigma\to M$ is a hypersurface and $\xi$ is a unit normal vector field along the hypersurface, identified with the corresponding function $G\to \m$, then the shape operator can simply be computed by $S(X) = -D_{X} \xi(g)$, for any function $X:G\to \m$ such that $X(g)$ is perpendicular to $\xi(g)$ for all $g\in G$.
The shape operator is parallel if and only if the tensorial condition
\[
\nabla_X SY - S\nabla_X Y =0
\]
holds for all $X,Y\in \Gamma(T\Sigma)$. Equivalently, we can check this condition with the absolute derivative and we find that $S$ is parallel if and only if
\[
D_V SW - SD_V W =0
\]
holds for all $V,W:G\to \m$ such that $V(g),W(g)\perp \xi(g)$.
%
%
%

We now describe the Levi-Civita connection and the Riemann-Christoffel curvature tensor of all spaces appearing in Theorem \ref{theo:classnonred},  with respect to the basis $\{u_1,u_2,u_3,u_4\}$ given in Section \ref{sec:preliminaries} for each of them.

\smallskip\noindent
{\bf Type A1:}
{\small
\begin{equation}\label{lambdaA1}
\begin{array}{ll}
\lambda(u_1) = \begin{pmatrix}
0 & 0 & 0 & 0\\
0 & 1 & 0 & 0\\
0 & 0 & 0 & 0\\
0 & -\frac{b}{c} & -\frac{c}{a} & -1
\end{pmatrix}, & \lambda(u_2) = \begin{pmatrix}
0 & -\frac{8db}{a(a-4d)} & \frac{c}{a} & 1\\
-1 & 0 & \frac{1}{2} & 0\\
0 & -\frac{4b}{a-4d} & 0 & 0\\
-\frac{b}{a} & \frac{4cb}{a(a-4d)} & -\frac{b}{2a} & 0\\
\end{pmatrix} \vspace{2mm}
\\

\lambda(u_3) = \begin{pmatrix}
0 & \frac{c}{a} & 0 & 0\\
0 & \frac{1}{2} & 0 & 0\\
0 & 0 & 0 & 0\\
-\frac{c}{a} & -\frac{b}{2a} & 0 & -\frac{1}{2}
\end{pmatrix}, & \lambda(u_4) =\begin{pmatrix}
0 & -1 & 0 & 0\\
0 & 0 & 0 & 0\\
0 & 0 & 0 & 0\\
1 & 0 & -\frac{1}{2} & 0
\end{pmatrix}.
\end{array}
\end{equation}

\begin{equation}\label{RA1}
\begin{array}{ll}
R(u_1,u_2) =\left( \begin {array}{cccc} 0& \frac {b (a+20 d)}{\alpha\, \left(a -4d \right)}&-{\frac {c}{a}}&-1\\1&0&-\frac 12 &0\\0&\,{\frac {12b}{a-4d}}&0&0\\ \noalign{\smallskip}{\frac {4b}{a}}&-\,{\frac {12bc}{a \left(a -4d \right) }}&{\frac {b}{a}}&0\end {array} \right), &R(u_1,u_3)=\left( \begin {array}{cccc} 0&-{\frac {c}{a}}&0&0\\0&0&0&0\\0&0&0&0\\{\frac {c}{a}}&0&-\,{\frac {c}{2a}}&0\end {array} \right), 
\vspace{2mm} \\ 
R(u_1,u_4) =\left( \begin {array}{cccc} 0& -1& 0 &0 \\0&0&0&0\\ 0& 0 &0&0\\ 1 &0 & -\frac 12 &0\end {array} \right), &R(u_2,u_3)=\left( \begin {array}{cccc} 0&-\frac {
b(a +4 d)}{2a  \left(a -4d \right) }&-\,{\frac {c}{2a}}&-\frac 12\\ \noalign{\smallskip}\frac 12 &-{\frac {c}{a}}&-\frac 14&0\\\noalign{\smallskip}0&-{\frac {2b}{a -4d}}&0&0\\\noalign{\smallskip}-{\frac {b}{a}}&\frac {bc (3a-4d)}{a^{2} (a -4d) }  &{\frac {c^{2}}{a^{2}}}&{\frac {c}{a}}\end {array} \right), 
\vspace{2mm}\\ 
R(u_2,u_4)=\left( \begin {array}{cccc} 0&0&0&0\\ 0&-1&0&0\\ 0&0&0&0\\ 0&{\frac {b}{a}}&{\frac {c}{a}}&1\end {array} \right), &R(u_3,u_4)=\left( \begin {array}{cccc} 0&\frac 12 &0&0\\ 0&0&0&0\\ 0&0&0&0\\ -\frac 12& 0 & {\frac {1}{4}}&0\end {array} \right)  .
\end{array}
\end{equation}
}

\smallskip\noindent
{\bf Type A2:}
{\small \begin{equation}\label{lambdaA2}
\begin{array}{ll}
\lambda(u_1)=\begin{pmatrix}
0 & 0 & \frac{ \kappa c}{d} & \kappa \\ 0 & 0 & 0 & 0 \\ 0 & 0 & 0 & 0 \\ 0 & 0 & \frac{ \kappa\,a}{d}  & 0\end{pmatrix}, &\lambda(u_2)=\left( \begin {array}{cccc} 0&-{\frac {\kappa c}{d}}&0&0\\ 0&0&0&\kappa \\ 0&0&0&0\\0&-{\frac {\kappa \, a}{d}}&0&0\end {array} \right), 
\vspace{2mm}\\
\lambda(u_3)=\left( \begin {array}{cccc} {\frac {\kappa\,c}{d}}&0&-{\frac { \left( \kappa-1 \right) bc}{ad}}&-{\frac {\kappa\,{c}^{2}-bd}{ad}}\\0&0&0&0\\0&0&0&\kappa\\{\frac {\kappa\,a}{d}}&0&-{\frac { \left( \kappa-1 \right) b}{d}}&-{\frac {\kappa\,c}{d}}\end {array} \right), &\lambda(u_4)=\left( \begin {array}{cccc} -1&0&-{\frac {\kappa\,{c}^{2}-bd}{ad}}&-{\frac { \left( \kappa-1 \right) c}{a}}\\0&0&0&0\\0&0&1&0\\0&0&-{\frac {\kappa\,c}{d}}&0\end {array} \right).
\end{array}
\end{equation}
\begin{equation}\label{RA2}
\begin{array}{ll}
R(u_1,u_2) =\left( \begin {array}{cccc} 0&-{\frac {{\kappa}^{2}a}{d}}&0&0\\0&0&-{\frac {{\kappa}^{2}a}{d}}&0\\0&0&0&0\\0&0&0&0\end {array} \right), &R(u_1,u_3)=\left( \begin {array}{cccc} {\frac {{\kappa}^{2}a}{d}}&0&-{\frac {\kappa ^2 b}{d}}&-{\frac {{\kappa}^{2}c}{d}}\\0&0&0&0\\0&0&-{\frac {{\kappa}^{2}a}{d}}&0\\0&0&0&0\end {array} \right), \vspace{2mm}\\
R(u_1,u_4)=\left( \begin {array}{cccc} 0&0&-{\frac { \kappa ^2 \,c}{d}}& -\kappa ^2 \\0&0&0&0\\ 0&0&0&0\\ 0&0&-{\frac { \kappa ^2 \,a}{d}}&0\end {array} \right), &R(u_2,u_3)=\left( \begin {array}{cccc} 0&{\frac {{\kappa} b}{d}}&0&0\\{\frac {{\kappa}^{2}a}{d}}&0&-{\frac {\kappa\, \left( \kappa-1 \right) b}{d}}&-{\frac {{\kappa}^{2}c}{d}}\\0&{\frac {{\kappa}^{2}a}{d}}&0&0\\0&0&0&0\end {array} \right), \vspace{2mm}\\
R(u_2,u_4)=\left( \begin {array}{cccc} 0&0&0&0\\0&0&-{\frac {{\kappa}^{2}c}{d}}&-{\kappa}^{2}\\0&0&0&0\\0&{\frac {{\kappa}^{2}a}{d}}&0&0\end {array} \right), &R(u_3,u_4)=\left( \begin {array}{cccc} 0&0&-{\frac { 2\left( \kappa-1 \right) bc}{ad}} &-{\frac { 2\left( \kappa -1\right)  \, b  }{a}}\\0&0&0&0\\0&0&-{\frac {{\kappa}^{2}c}{d}}& -\kappa^2 \\-{\frac {\kappa^2 a}{d}}&0&{\frac { \left( \kappa ^2 -2\kappa +2\right)  b}{d}}&{\frac { \kappa ^2 \,c}{d}} \end {array}\right).
\end{array}
\end{equation}
}

\smallskip\noindent
{\bf Type A3:}
{\small \begin{equation}\label{lambdaA3}
\begin{array}{ll}
\lambda(u_1)=\left( \begin {array}{cccc} 0&0&1&{\frac {c}{b}}\\0&0&0&0\\0&0&0&-{\frac {a}{b}}\\0&0&0&0\end {array} \right) , & 
\lambda(u_2)=\left(\begin {array}{cccc}
 0 & {\frac {c}{b} -1} & 0 & 0\\
 0 & 0 & 1 & 1\\
 0 & -{\frac {a}{b}} & 0 & 0\\
 0 & 0 & 0 & 0
 \end {array} \right), 
\vspace{2mm}\\

\lambda(u_3)=\left( \begin {array}{cccc} -1&0&0&{\frac {{c}^{2}-bd}{ab}}\\0&0&0&0\\0&0&0&-{\frac {c}{b}}\\0&0&0&1\end {array} \right), &\lambda(u_4)=\left( \begin {array}{cccc} {\frac {c}{b}}&0&{\frac {{c}^{2}-bd}{ab}}&0\\0&0&0&0\\-{\frac {a}{b}}&0&-{\frac {c}{b}}&0\\0&0&1&0\end {array} \right). 
\end{array}
\end{equation}
\begin{equation}\label{RA3}
\begin{array}{ll}
R(u_1,u_2) =\left( \begin {array}{cccc} 0&-{\frac {a}{b}}&0&0\\ 0&0&0&{\frac {a}{b}}\\ 0&0&0&0\\ 0&0&0&0\end {array} \right), &R(u_1,u_3)=\left( \begin {array}{cccc} 0&0&-1&-{\frac {c}{b}}\\ 0&0&0&0\\ 0&0&0&{\frac {a}{b}}\\ 0&0&0&0\end {array} \right), \vspace{2mm}\\

R(u_1,u_4)=\left( \begin {array}{cccc} -{\frac {a}{b}}&0&-{\frac {c}{b}}&-{\frac {d}{b}}\\0&0&0&0\\ 0&0&0&0\\ 0&0&0&{\frac {a}{b}}\end {array} \right), &R(u_2,u_3)=\left( \begin {array}{cccc} 0&0&0&0\\ 0&0&-1&-{\frac {c}{b}}\\ 0&{\frac {a}{b}}&0&0\\ 0&0&0&0\end {array} \right), 
\vspace{2mm}\\

R(u_2,u_4)=\left( \begin {array}{cccc} 0&-\eta-{\frac {{d}}{b}}&0&0\\ -{\frac {a}{b}}&0&-{\frac {c}{b}}&\eta \\ 0&0&0&0\\ 0&{\frac {a}{b}}&0&0\end {array} \right), &R(u_3,u_4)=\left( \begin {array}{cccc} 0&0&0&0\\0&0&0&0\\ -{\frac {a}{b}}&0&-{\frac {c}{b}}&-{\frac {d}{b}}\\ 0&0&1&{\frac {c}{b}}\end {array} \right).
\end{array}
\end{equation}
}

\smallskip\noindent
{\bf Types A4 and B2:}
{\small \begin{equation}\label{lambdaA4}
\begin{array}{ll}
\lambda(u_1) = \begin{pmatrix}
0 & 0 & 0 & 0\\
0 & 1 & 0 & 0\\
0 & -\frac{b}{a} & -1 & 0\\
0 & 0 & 0 & 0
\end{pmatrix}, &
\lambda(u_2) = \begin{pmatrix}
0 & \frac{2b}{a}\eta & \eta & 0\\
-1 & 0 & 0 & 0\\
-\frac{b}{a} & 0 & 0 & 0\\
0 & 0 & 0 & 0
\end{pmatrix} \vspace{2mm}
\\
\lambda(u_3) = \begin{pmatrix}
0 & \eta & 0 & 0\\
0 & 0 & 0 & 0\\
-1 & 0 & 0 & 0\\
0 & 0 & 0 & 0
\end{pmatrix}, &
\lambda(u_4) = \begin{pmatrix}
0 & 0 & 0 & \frac{\eta}{2}\\
0 & 0 & 0 & 0\\
0 & 0 & 0 & 0\\
-1 & 0 & 0 & 0
\end{pmatrix}
\end{array}
\end{equation}
\begin{equation}\label{RA4}
\begin{array}{ll}
R(u_1,u_2) = \begin{pmatrix}
0 & -\frac{5b}{a}\eta & -\eta & 0\\
1 & 0 & 0 & 0\\
\frac{4b}{a} & 0 & 0 & 0\\
0 & 0 & 0 & 0
\end{pmatrix}, &
R(u_1,u_3) = \begin{pmatrix}
0 & -\eta & 0 & 0\\
0 & 0 & 0 & 0\\
1 & 0 & 0 & 0\\
0 & 0 & 0 & 0
\end{pmatrix}, \vspace{2mm} \\ 
R(u_1,u_4) = \begin{pmatrix}
0 & 0 & 0 & -\frac{\eta}{2}\\
0 & 0 & 0 & 0\\
0 & 0 & 0 & 0\\
1 & 0 & 0 & 0
\end{pmatrix}, &
R(u_2,u_3) = \begin{pmatrix}
0 & 0 & 0 & 0\\
0 & -\eta & 0 & 0\\
0 & \frac{b}{a}\eta & \eta & 0\\
0 & 0 & 0 & 0
\end{pmatrix}, \vspace{2mm} \\ 
R(u_2,u_4) = \begin{pmatrix}
0 & 0 & 0 & 0\\
0 & 0 & 0 & -\frac{\eta}{2}\\
0 & 0 & 0 & -\frac{b}{2a}\eta \\
0 & \frac{2b}{a}\eta & \eta & 0
\end{pmatrix}, &
R(u_3,u_4) = \begin{pmatrix}
0 & 0 & 0 & 0\\
0 & 0 & 0 & 0\\
0 & 0 & 0 & -\frac{\eta}{2}\\
0 & \eta & 0 & 0
\end{pmatrix}.
\end{array}
\end{equation}
}

\smallskip\noindent
{\bf Type B1:}
{\small \begin{equation}\label{lambdaB1}
\begin{array}{ll}
\lambda(u_1)=\left(  \begin {array}{cccc} -1&-{\frac {c}{2a}}&-{\frac {d}{a}}&0\\0&1&0&0\\0&0&1&0\\0&-{\frac {b}{a}}&-{\frac {3c}{2a}}&-1\end {array} \right) , &\lambda(u_2)=\left( \begin {array}{cccc} -{\frac {c}{2a}}&{\frac {{c}^{2}-2bd}{{a}^{2}}}&-{\frac {cd}{{a}^{2}}}&-{\frac {d}{2a}}\\\noalign{\smallskip}-1&-{\frac {c}{a}}&-{\frac {d}{2a}}&0\\\noalign{\smallskip}0&{\frac {2b}{a}}&{\frac {3c}{2a}}&1\\\noalign{\smallskip}-{\frac {b}{a}}&-{\frac {bc}{{a}^{2}}}&{\frac {bd-3{c}^{2}}{2{a}^{2}}}&0\end {array} \right), 
\vspace{2mm} \\
\lambda(u_3)=\left( \begin {array}{cccc} -{\frac {d}{a}}&-{\frac {cd}{{a}^{2}}}&-{\frac {{d}^{2}}{{a}^{2}}}&0\\\noalign{\medskip}0&-{\frac {d}{2a}}&0&0\\\noalign{\medskip}0&{\frac {3c}{2a}}&{\frac {d}{a}}&0\\\noalign{\medskip}-{\frac {3c}{2a}}&{\frac {bd-3\,{c}^{2}}{2{a}^{2}}}&-{\frac {cd}{{a}^{2}}}&{\frac {d}{2a}}\end {array} \right), &\lambda(u_4)=\left( \begin {array}{cccc} 0&-{\frac {d}{2a}}&0&0\\ 0&0&0&0\\0&0&0&0\\0&0&{\frac {d}{2a}}&0\end {array} \right). 
\end{array}\end{equation}
\begin{equation}\label{RB1}
\begin{array}{ll}
R(u_1,u_2) =\left( \begin {array}{cccc} {\frac {3c}{2a}}&{\frac {22\,bd-15\,{c}^{2}}{4{a}^{2}}} &{\frac {3cd}{2{a}^{2}}}&0\\\noalign{\smallskip}0&{\frac {3c}{2a}}&0&0\\\noalign{\smallskip}0&-{\frac {3b}{a}}&-{\frac {3c}{2a}}&0\\\noalign{\smallskip}{\frac {3b}{a}}&{\frac {3bc}{2{a}^{2}}}&{\frac {5 \left(3{c}^{2}-2\,bd \right)}{4{a}^{2}}}&-{\frac {3c}{2a}}\end {array} \right), &R(u_1,u_3)=\left(\begin {array}{cccc} {\frac {d}{a}}&{\frac {5cd}{4{a}^{2}}}&{\frac {{d}^{2}}{{a}^{2}}}&0\\\noalign{\smallskip}0&{\frac {d}{2a}}&0&0\\\noalign{\smallskip}0&-{\frac {3c}{2a}}&-{\frac {d}{a}}&0\\\noalign{\smallskip}{\frac {3c}{2a}}&{\frac {3{c}^{2}-bd}{2{a}^{2}}}&{\frac {3cd}{4{a}^{2}}}&-{\frac {d}{2a}}\end {array} \right), \vspace{2mm}\\
R(u_2,u_3)=\left( \begin {array}{cccc} -{\frac {cd}{4{a}^{2}}}&\frac {3d\left( bd-c^2 \right) }{4{a}^{3}}&0&{\frac {{d}^{2}}{4{a}^{2}}}\\\noalign{\smallskip}{\frac {d}{2a}}&{\frac {cd}{4{a}^{2}}}&{\frac {{d}^{2}}{4{a}^{2}}}&0\\\noalign{\smallskip}0&{\frac {9c^2 -10bd}{4{a}^{2}}}&-{\frac {cd}{4{a}^{2}}}&-{\frac {d}{2a}}\\\noalign{\smallskip} {\frac {8bd -9 c^2}{4{a}^{2}}}&{\frac {9c \left(bd-c^2\right)}{4{a}^{3}}}&{\frac {3d\left( b{d} -c^{2}\right)}{2{a}^{3}}} & {\frac {cd}{4{a}^{2}}}\end {array} \right), & R(u_1,u_4)=\left( \begin {array}{cccc} 0&{\frac {d}{2a}}&0&0\\ 0&0&0&0\\0&0&0&0\\0&0&-{\frac {d}{2a}}&0\end {array} \right),  
\vspace{2mm}\\
R(u_2,u_4)=\left( \begin {array}{cccc} {\frac {d}{2a}}&{\frac {3cd}{4{a}^{2}}}&{\frac {{d}^{2}}{2{a}^{2}}}&0\\ \noalign{\smallskip}0&{\frac {d}{a}}&0&0\\ \noalign{\smallskip}0&-{\frac {3c}{2a}}&-{\frac {d}{2a}}&0\\\noalign{\smallskip} {\frac {3c}{2a}}&{\frac {3\,{c}^{2}-2bd}{2{a}^{2}}}& \frac{cd}{4a^2}&-{\frac {d}{a}}\end {array} \right), & R(u_3,u_4)=\left( \begin {array}{cccc} 0&{\frac {{d}^{2}}{4{a}^{2}}}&0&0\\0&0&0&0\\0&0&0&0\\0&0&-{\frac {{d}^{2}}{4{a}^{2}}}&0\end {array} \right). 
\end{array}
\end{equation}
}


\section{Hypersurfaces of homogeneous spaces of type \textbf{A1}}
\setcounter{equation}{0}

Let $(M,g)$ be a homogeneous space of type \textbf{A1}. We first determine some necessary algebraic conditions on the components of a unit normal vector field $\xi$ along a hypersurface of $(M,g)$ in order for the hypersurface to have a Codazzi second fundamental form. In particular, we prove the following.

\begin{lemma} \label{necA1}
Let $(M,g)$ be a homogeneous pseudo-Riemannian four-manifold of type {\rm\bf A1} with metric~\eqref{gA1}. If there exists a non-degenerate hypersurface $F : \Sigma \to M$ with Codazzi second fundamental form, then $b=0$.
\end{lemma}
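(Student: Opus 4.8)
The plan is to invoke the criterion recalled in the preliminaries: a non-degenerate hypersurface has a Codazzi second fundamental form if and only if $R(X,Y)\xi = 0$ for all tangent vector fields $X,Y$, where $\xi$ is a unit normal. Since the metric is $G$-invariant and $G$ acts transitively by isometries, it suffices to work at the origin: after moving a point of $\Sigma$ to $eH$ by an isometry, the existence of the hypersurface produces a non-null vector $\xi = \sum_i \xi_i u_i \in \m$ with $R(X,Y)\xi = 0$ for all $X,Y \in W := \xi^{\perp}$, where $R$ is the curvature \eqref{RA1}. I will argue by contradiction, assuming $b \neq 0$, and show that no such $\xi$ can exist.

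The first step exploits the shape of \eqref{RA1}. Reading off the third rows of the curvature matrices, only $R(u_1,u_2)$ and $R(u_2,u_3)$ contribute to the third component of $R(X,Y)\xi$, giving
\[
[R(X,Y)\xi]_3 = \frac{2b\,\xi_2}{a-4d}\bigl(6(X_1Y_2 - X_2Y_1) - (X_2Y_3 - X_3Y_2)\bigr).
\]
The bracket is the value on $(X,Y)$ of the decomposable $2$-form $-\theta^2 \wedge (6\theta^1 + \theta^3)$. A decomposable $2$-form $\alpha \wedge \beta$ (with $\alpha,\beta$ independent) restricts to zero on the hyperplane $W = \ker\phi$, $\phi = g(\xi,\cdot) \neq 0$, exactly when $\phi \in \Spn\{\alpha,\beta\}$. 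As $a-4d \neq 0$, requiring $[R(X,Y)\xi]_3 = 0$ on $W$ forces either $\xi_2 = 0$ or $\phi \in \Spn\{\theta^2, 6\theta^1+\theta^3\}$. Computing $\phi = g\xi$ from \eqref{gA1}, its $\theta^4$-coefficient equals $a\xi_2$, while every element of $\Spn\{\theta^2, 6\theta^1+\theta^3\}$ has vanishing $\theta^4$-coefficient; since $a \neq 0$, both alternatives collapse to $\xi_2 = 0$.

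With $\xi_2 = 0$ the form $\phi = g\xi$ has no $\theta^4$-part, so $u_4 \in W$. The next step is to impose $R(u_4,Y)\xi = 0$ for all $Y \in W$: the relevant matrices send $\xi$ into the $u_4$-line, so this reduces to the vanishing on $W$ of a single $1$-form $\psi$ (again with no $\theta^4$-part), i.e. $\psi \in \Spn\{\phi\}$. Comparing coefficients and using $a(a-4d) \neq 0$ together with the non-nullity of $\xi$ pins $\xi$ down to one of two families: either $\xi = \xi_1 u_1 + \xi_4 u_4$ with $\xi_1 \neq 0$, or $\xi$ proportional to $\tfrac12 u_1 + u_3 - \tfrac{c}{a}u_4$. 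In the first family $W = \Spn\{u_4,\, u_1+2u_3,\, u_2 - \tfrac{\xi_4}{\xi_1}u_1\}$, and a direct computation gives $R(u_1+2u_3,\, u_2 - \tfrac{\xi_4}{\xi_1}u_1)\xi = \tfrac{6b}{a}\xi_1\, u_4$; in the second family $\phi$ is a multiple of $\theta^3$, so $W = \Spn\{u_1,u_2,u_4\}$ and $R(u_1,u_2)\xi = \tfrac{3b}{a}\,u_4$. In both cases the Codazzi condition forces this vector to vanish, and since $a \neq 0$ (and $\xi_1 \neq 0$ in the first case) we obtain $b = 0$, contradicting the assumption.

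The main obstacle is the analysis after $\xi_2 = 0$ has been secured: the third-component identity no longer yields information, and the surviving components of $R(X,Y)\xi$ do not factor through a single decomposable form. One must instead use $R(u_4,\cdot)\xi$ to cut the possibilities for $\xi$ down to the two explicit families above and then locate, in each, a concrete pair of tangent vectors whose curvature term is a nonzero multiple of $b$. Throughout, the non-degeneracy hypothesis $a(a-4d)\neq 0$ is what keeps the relevant coefficients from degenerating, and it is used repeatedly to discard spurious branches.
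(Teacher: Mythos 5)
Your argument is correct; I checked the two closing computations and they come out exactly as you state: $R\bigl(u_1+2u_3,\,u_2-\tfrac{\xi_4}{\xi_1}u_1\bigr)\xi=\tfrac{6b}{a}\xi_1\,u_4$ in the first family and $R(u_1,u_2)\xi=\tfrac{3b}{a}\xi_3\,u_4$ in the second. The underlying criterion is the same one the paper uses --- Codazzi is the pointwise algebraic condition $R(X,Y)\xi=0$ for $X,Y\perp\xi$ --- but your route through it is genuinely different. The paper splits on $\beta\neq0$ versus $\beta=0$ (your $\xi_2$) and in each branch exhibits hand-picked tangent pairs that force first $\gamma=0$ (or $\gamma=2\alpha$) and then $b=0$ directly; it never establishes $\xi_2=0$. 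You instead argue by contradiction from $b\neq0$, first extracting $\xi_2=0$ from the structural observation that the third component of $R(\cdot,\cdot)\xi$ is $\tfrac{2b\xi_2}{a-4d}$ times a fixed decomposable $2$-form, combined with the elementary lemma that a decomposable $2$-form with independent factors vanishes on the hyperplane $\ker\phi$ exactly when $\phi$ lies in the span of its factors; you then use the $R(u_4,\cdot)\xi$ condition, which lands in the $u_4$-line, to classify the admissible normals into two explicit families. This buys transparency --- it is clear why your case division is exhaustive, whereas the paper's choice of test vectors looks unmotivated by comparison --- at the cost of an extra layer of linear algebra. One step you should write out rather than wave at: in the reduction $\psi\in\Spn\{\phi\}$, comparing coefficients leaves a residual branch in which $\xi_1-\tfrac{\xi_3}{2}=0$ while $\tfrac{c}{a}\xi_3+\xi_4\neq0$; chasing it with $a-4d\neq0$ forces $\xi_1=\xi_3=0$, hence $\xi=\xi_4 u_4$, which is null and so excluded only by the non-degeneracy hypothesis. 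You allude to this ("the non-nullity of $\xi$"), but the branch deserves an explicit line since it is the only place where non-degeneracy, rather than pure algebra, closes a case.
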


\begin{proof}
Denote by $\{u_1,u_2,u_3,u_4\}$ the basis of $\m$ given in \eqref{eq:mA1} and let $\alpha, \beta, \gamma, \delta: M \to \R$ be functions such that the vector field $\xi=\alpha u_1+\beta u_2+\gamma u_3+\delta u_4$ is normal to $\Sigma$ and satisfies $\langle \xi,\xi \rangle =\varepsilon=\pm 1$. For an arbitrary function $X = x_1u_1+x_2u_2+x_3u_3+x_4u_4$, we have from \eqref{gA1}
\[
\langle X, \xi\rangle = a\left(\alpha - \frac{\gamma}{2} \right) x_1+  \left( a\delta+b\beta+c\gamma \right) x_2+ \left( c\beta+d\gamma-\frac{a\alpha}{2} \right) x_3 + a\beta\,x_4,
\]
so that
\[
\langle \xi, \xi\rangle  = a\alpha^2-a \alpha\gamma+2a\beta\delta+b\beta^2+2c\beta\gamma+d\gamma^2 = \varepsilon.
\]

First suppose $\beta \neq 0$ and let $X = 2\beta\, u_1 + (\gamma-2\alpha)\,u_4$ and $Y= 2a\beta\, u_3 + (a\alpha - 2c\beta -2d\gamma)\, u_4$. Then $X$ and $Y$ are perpendicular to $\xi$ and $R(X,Y)\xi = 0$ implies $\gamma = 0$. Now let $Z=a\beta\,u_2 - (a\delta+b\beta)\, u_4$ then $Z$ is perpendicular to $\xi$ and $R(Y,Z)\xi =0$ implies $b=0$.

Now suppose $\beta  =0 $. Let $X= u_4$ and $Y=(2d\gamma-a\alpha)\, u_1 + a(\gamma-2\alpha)\, u_3$ be two functions perpendicular to $\xi$. We have $R(X,Y)\xi = 0$ if and only if $\gamma = 0$ or $\gamma-2\alpha  = 0$. First consider the subcase  $\gamma = 0$. Then $Z= \delta\,u_1 - \alpha\,u_2$ is perpendicular to $\xi$ and $R(Y,Z)\xi = 0$ implies $b=0$. For the second subcase, $\gamma-2\alpha=0$, consider $Z = u_1$ and $W=\left(4d\alpha - a\alpha\right)\,u_2 - 2(a\delta +2c\alpha)\, u_3$. Then $Z$ and $W$ are perpendicular to $\xi$ and $R(Z,W)\xi = 0$ again implies $b=0$.
\end{proof}

By Remark~\ref{rem:symmetric cases}, a homogeneous space of type \textbf{A1} with $b=0$ is locally symmetric. Hence, we proved the following.

\begin{theorem}
A non-reductive, not locally symmetric homogeneous pseudo-Riemannian four-man\-if\-old of type {\rm\bf A1} does not allow hypersurfaces with Codazzi second fundamental form. In particular, it does not allow parallel and totally geodesic hypersurfaces.
\end{theorem}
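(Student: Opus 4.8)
The plan is to obtain the theorem as an immediate consequence of Lemma~\ref{necA1} combined with the local-symmetry criterion recorded in Remark~\ref{rem:symmetric cases}, arguing by contraposition. Indeed, all of the genuine computation has already been carried out in the proof of Lemma~\ref{necA1}; what remains is only a short logical deduction.

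First I would fix a non-reductive, not locally symmetric homogeneous pseudo-Riemannian four-manifold $(M,g)$ of type \textbf{A1} with metric~\eqref{gA1} and suppose, towards a contradiction, that it admits a hypersurface $F:\Sigma\to M$ with Codazzi second fundamental form. Since the second fundamental form and the shape operator are only defined along a non-degenerate hypersurface, such an $F$ satisfies the hypotheses of Lemma~\ref{necA1}, and the lemma then forces $b=0$. By Remark~\ref{rem:symmetric cases}, a space of type \textbf{A1} with $b=0$ is locally symmetric, contradicting our standing assumption. Hence no hypersurface with Codazzi second fundamental form exists.

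For the final assertion I would recall the implication noted in the preliminaries: by the Codazzi equation~\eqref{eC}, a hypersurface has a Codazzi second fundamental form exactly when $R(X,Y)\xi=0$, and this certainly holds when $\nabla^\Sigma h=0$; thus every parallel hypersurface, and a fortiori every totally geodesic one, has a Codazzi second fundamental form. Since we have just shown the class of Codazzi hypersurfaces to be empty, these strictly smaller classes are empty as well. The only subtlety worth flagging is the implicit non-degeneracy assumption, without which the second fundamental form, and hence the whole statement, would not be meaningful; there is otherwise no real obstacle here, the substance of the argument residing entirely in Lemma~\ref{necA1}.
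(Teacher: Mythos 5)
Your argument is exactly the one the paper uses: Lemma~\ref{necA1} forces $b=0$ for any non-degenerate Codazzi hypersurface, Remark~\ref{rem:symmetric cases} then makes the ambient space locally symmetric, and parallel (hence totally geodesic) hypersurfaces are Codazzi by equation~\eqref{eC}. The proposal is correct and essentially identical to the paper's deduction.
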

 



\section{Hypersurfaces of homogeneous spaces of type \textbf{A2}}
\setcounter{equation}{0}

In this case, the transitive Lie algebra $\g$ depends on a real parameter $\kappa$, which increases the computational difficulties of the classification problem we are dealing with. For this reason, this is the only case where we shall explicitly make use of the fact that 
$\m\subseteq \g$ is a subalgebra, and all our computations will be done on a Lie group $M_0$ with Lie algebra $\m$, see Remark~\ref{rem:local lie group}. 
The identity component of the automorphism group of $\m$ with respect to the basis $\{u_1,u_2,u_3,u_4\}$ is given by
\[
\left\{ \left. \phi(x_1,x_2,x_3,x_4,x_5,x_6)= \begin{pmatrix}
x_1 & 0 & 0 & x_6\\
0 & x_2 & 0 & x_5\\
0 & 0 & x_3 & x_4\\
0 & 0 & 0 & 1
\end{pmatrix} \ \right| \ x_1,\dots,x_6\in \R \mbox{ and } x_1x_2x_3 \neq 0 \right\}.
\]
By Remark~\ref{rem:symmetric cases}, we only need to consider the case where  $b\neq 0$. Then, the metric \eqref{gA2} pulled back under $\phi\left(\frac{\sqrt{b}}{a},\frac{1}{\sqrt{a}}, \frac{1}{\sqrt{b}}, 0, 0,\frac{c}{a}\right)$ is equal to
\begin{equation}\label{metA2}
g_0= -2\theta^1 \circ \theta^3+\theta^2 \circ \theta^2 + \theta^3 \circ \theta^3 +d\, \theta^4 \circ \theta^4, \quad d \neq 0.
\end{equation}
We can integrate the Lie algebra automorphism $\phi$ to a Lie group automorphism $\Phi$ of $M_0$. This map $\Phi:M_0\to M_0$ constitutes an isometry between the left invariant metric  $g_0$ and the left invariant metric given by \eqref{gA2}.
Therefore, without loss of generality we can restrict to the case with $a=b=1$ and $c=0$. This will simplify some computations.

\begin{lemma}\label{necA2}
Let $(M,g)$ be a non-reductive homogeneous pseudo-Riemannian four-manifold of type {\rm\bf A2} with metric \eqref{gA2} and denote by $\{u_1,u_2,u_3,u_4\}$ the basis of $\m$ given in \eqref{eq:mA2}. Let $F : \Sigma \to M$ be a hypersurface and $\alpha, \beta, \gamma, \delta : M \to \R$ functions such that the vector field $\xi=\alpha u_1+\beta u_2+\gamma u_3+\delta u_4$ is normal to $\Sigma$ and satisfies $\langle \xi,\xi \rangle =\varepsilon=\pm 1$. Then $\Sigma$ has a Codazzi second fundamental form if and only if one of the following sets of conditions holds: 
\begin{itemize}
\item[(i)] $\beta=\gamma=0$ and $d\delta^2=\varepsilon$;
\item[(ii)] $\gamma=\delta=0$ and $\beta^2=\varepsilon=1$;
\item[(iii)] $\kappa=2$, $\gamma=0$, $\beta \neq 0$, $\delta\neq 0$ and $\beta^2+d\delta^2=\varepsilon$. 
\end{itemize}
\end{lemma}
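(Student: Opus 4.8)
The plan is to reduce the Codazzi condition to a purely algebraic condition on the components $(\alpha,\beta,\gamma,\delta)$ of $\xi$ and then solve it. Since the ambient space is, locally, the Lie group $M_0$ with a left-invariant metric, the curvature endomorphisms $R(u_i,u_j)$ are constant and given by \eqref{RA2}; by the normalization established in the preceding paragraph I may assume $a=b=1$, $c=0$, so that the Gram matrix of $g_0$ in the basis $\{u_1,\dots,u_4\}$ is explicit and $\langle\xi,\xi\rangle=\beta^2+\gamma^2-2\alpha\gamma+d\delta^2=\varepsilon$. By the remark following \eqref{eC}, $\Sigma$ has a Codazzi second fundamental form if and only if $R(X,Y)\xi=0$ for all $X,Y$ perpendicular to $\xi$; as $R$ is constant, this is a pointwise algebraic condition on $\xi\in\m$, and it suffices to test it on the three pairs coming from a basis of the hyperplane $\xi^\perp$.

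For the necessity I would extract two decisive conditions by evaluating $R(\cdot,\cdot)\xi$ on well-chosen orthogonal vectors. Taking $X=\beta u_1+\gamma u_2$ and $Y=(\gamma-\alpha)u_1+\gamma u_3$, both perpendicular to $\xi$, a short computation with \eqref{RA2} shows that the $u_2$-component of $R(X,Y)\xi$ equals $\kappa\gamma^3/d$; hence the Codazzi condition forces $\kappa\gamma^3=0$, and for $\kappa\neq0$ this yields $\gamma=0$. With $\gamma=0$ the nondegeneracy reads $\beta^2+d\delta^2=\varepsilon$, and evaluating on $X=\alpha u_2+\beta u_3$, $Y=d\delta u_2-\beta u_4$ (again perpendicular to $\xi$) gives $R(X,Y)\xi=\beta^2\delta(\kappa-2)u_1$, so that $\beta^2\delta(\kappa-2)=0$. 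Splitting on the vanishing of $\beta$ and $\delta$, and using $\beta^2+d\delta^2=\varepsilon\neq0$, then produces exactly the three alternatives: $\beta=0$ (forcing $\delta\neq0$, case (i)); $\delta=0$ with $\beta\neq0$ (forcing $\beta^2=\varepsilon=1$, case (ii)); and $\beta,\delta\neq0$ (forcing $\kappa=2$, case (iii)).

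For the sufficiency I would, in each of the three cases, write down an explicit basis of $\xi^\perp$ and verify directly with \eqref{RA2} that $R(e_i,e_j)\xi=0$ on its three generating pairs. In cases (i) and (ii) this holds for every value of $\kappa$, while in case (iii) it is precisely the value $\kappa=2$ that makes the only surviving component $\beta^2\delta(\kappa-2)u_1$ vanish; one also notes that in case (ii) the relation $\beta^2=\varepsilon$ with $\beta$ real automatically gives $\varepsilon=1$. Together with the necessity argument this closes the equivalence.

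The main obstacle, and the point requiring the most care, is the exceptional value $\kappa=0$, at which the curvature degenerates so that every $R(u_i,u_j)$ except $R(u_3,u_4)$ vanishes; there the one-shot argument above gives no information. Instead one must argue directly that $R(X,Y)\xi=(X_3Y_4-X_4Y_3)\,R(u_3,u_4)\xi$ vanishes on $\xi^\perp\times\xi^\perp$ exactly when $R(u_3,u_4)\xi=0$ (that is, $\gamma=\delta=0$, case (ii)) or when the projection of $\xi^\perp$ onto $\Spn\{u_3,u_4\}$ is at most one-dimensional (that is, $\gamma=\beta=0$, case (i)), so that no new possibilities arise. A secondary nuisance is that a convenient basis of $\xi^\perp$ changes shape according to which of $\beta,\gamma,\delta$ vanish, so the degenerate sub-cases of the basis choice must be tracked to guarantee that the case analysis is exhaustive.
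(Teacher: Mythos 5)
Your argument is correct and follows essentially the same route as the paper: reduce the Codazzi condition to the pointwise algebraic condition $R(X,Y)\xi=0$ on $\xi^\perp$, force $\gamma=0$ with a first test pair, derive $\beta^2\delta(\kappa-2)=0$ from the pair $X=\alpha u_2+\beta u_3$, $Y=d\delta u_2-\beta u_4$ (which is exactly the paper's second step), and verify sufficiency on explicit bases of $\xi^\perp$. The only real divergence is your first test pair, whose obstruction $\kappa\gamma^3/d$ degenerates at $\kappa=0$ and forces your separate analysis of that value; the paper instead tests the additional pair $Y=d\delta u_1+\gamma u_4$, $Z=(\alpha-\gamma)u_1-\gamma u_3$, for which $R(Y,Z)\xi$ picks up the $\kappa$-independent contribution $\gamma^2 R(u_3,u_4)\xi$ and so rules out $\gamma\neq 0$ uniformly in $\kappa$.
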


\begin{proof} It follows from \eqref{gA2} that, for an arbitrary function $X=x_1u_1+x_2u_2+x_3u_3+x_4u_4$, 
\[
\langle X,\xi\rangle=-\gamma x_1+ \beta x_2-( \alpha-\gamma)x_3+d\delta x_4,
\]
in particular,
\begin{equation}\label{nxiA2}
\langle \xi,\xi\rangle=\beta^2 -2\alpha \gamma +\gamma ^2 +d\delta^2 = \varepsilon.
\end{equation}
First suppose that $\gamma \neq 0$ and consider the functions $X =\beta u_1 + \gamma u_2$, $Y =d\delta u_1 + \gamma u_4$ and  $Z=(\alpha - \gamma)u_1 - \gamma u_3$. They are perpendicular to $\xi$ and the equations $R(X,Z)\xi = 0$ and $R(Y,Z)\xi =0$ have no solutions. 

Now suppose $\gamma = 0$. If $\beta = 0$ we are in case (i), so we may assume that $\beta \neq 0$. Then $X= \alpha u_2 + \beta u_3$ and $Y= d\delta u_2 - \beta u_4$ are perpendicular to $\xi$ and $R(X,Y)\xi = 0$ is equivalent to $\delta = 0$ or $\kappa = 2$. 

For the converse, remark that the following functions span an orthonormal basis of $\xi(g)$ for all $g\in G$:
\begin{align}
& \mbox{Case (i)}: & V_1=u_1, &&& V_2=u_2, && V_3=d\delta u_3+\alpha u_4, \label{basisA2i} \\
& \mbox{Case (ii)}: & V_1=u_1, &&& V_2=\alpha u_2+\beta u_3, && V_3=u_4, \label{basisA2ii} \\
& \mbox{Case (iii)}: & V_1=u_1, &&& V_2=d \delta u_2-\beta u_4, && V_3=\alpha u_2+\beta u_3. \label{basisA2iii}
\end{align}
It is now sufficient to check that, in each of the three cases, $R(V_i,V_j)\xi =0$ for all $i,j \in \{1,2,3\}$.
\end{proof}

In Lemma~\ref{necA2}, we did not use the fact that $\mathrm{span}\{\xi\}^{\perp}=\mathrm{span}\{V_1,V_2,V_3\}$ is integrable. Imposing integrability leads to differential conditions on the component functions of $\xi$, in addition to the algebraic conditions already obtained in Lemma~\ref{necA2}. 

\begin{theorem}\label{codA2}
In the notations of Lemma~{\rm\ref{necA2}}, the hypersurface $\Sigma$ has Codazzi second fundamental form if and only if one of the following sets of conditions holds: 
\begin{itemize}
\item[(i)] $\beta=\gamma=0$, $d\delta^2 = \varepsilon$ and 
\[
V_1(\alpha) = 0, \quad V_2(\alpha)=0,
\]
where $V_1,V_2$ and $V_3$ are the functions described in \eqref{basisA2i};  
\vspace{1mm}\item[(ii)] $\gamma=\delta=0$, $\beta^2 = \varepsilon =1$ and 
\[
V_1(\alpha)=0, \quad V_3(\alpha)-\alpha=0,
\]
where $V_1,V_2$ and $V_3$ are the functions described in \eqref{basisA2ii};
\vspace{1mm}\item[(iii)] $\kappa = 2$, $\gamma =0$, $\beta\neq 0$, $\delta\neq 0$, $\beta^2 + d\delta^2 = \varepsilon$ and 
\[
V_1(\alpha) = 0, \quad V_1(\beta) = 0, \quad \beta V_2(\alpha) - \alpha V_2(\beta) + \frac{\varepsilon}{\delta}V_3(\beta) + \alpha\beta^2 =0, 
\]
where $V_1,V_2$ and $V_3$ are the functions described in \eqref{basisA2iii}.
\end{itemize}
\end{theorem}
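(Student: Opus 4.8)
The plan is to combine the purely algebraic Codazzi conditions already established in Lemma~\ref{necA2} with the integrability of the normal distribution, which is precisely the ingredient not yet exploited. For $\xi$ to be the unit normal of an actual hypersurface $\Sigma$, the distribution $\mathrm{span}\{\xi\}^\perp = \mathrm{span}\{V_1,V_2,V_3\}$ must be involutive. Since in this case we work on the Lie group $M_0$ (so that vector fields are functions $M_0\to\m$ and brackets are computed from the structure constants of $\m$ listed in Remark~\ref{rem:local lie group} together with directional-derivative terms, via $[X,Y]=\sum_j(X(y_j)-Y(x_j))u_j+\sum_{i,j}x_iy_j[u_i,u_j]$), Frobenius' theorem tells us that involutivity is equivalent to
\[
\langle [V_a,V_b],\xi\rangle = 0 \qquad \text{for all } a,b\in\{1,2,3\}.
\]
Thus $\Sigma$ has a Codazzi second fundamental form if and only if one of the three algebraic cases of Lemma~\ref{necA2} holds \emph{and} these three scalar integrability equations are satisfied; I would treat the three cases separately, reading off the values $\langle u_i,\xi\rangle$ from the inner-product formula at the start of the proof of Lemma~\ref{necA2} and using the normalization~\eqref{nxiA2}.

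In case~(i), where $\delta$ is a nonzero constant and only $\alpha$ varies, one finds $[V_1,V_2]=0$, while $\langle[V_1,V_3],\xi\rangle = d\delta\,V_1(\alpha)$ and $\langle[V_2,V_3],\xi\rangle = d\delta\,V_2(\alpha)$; since $d\delta\neq0$ these vanish exactly when $V_1(\alpha)=V_2(\alpha)=0$. In case~(ii), where $\beta=\pm1$ is constant, a short computation gives $\langle[V_1,V_2],\xi\rangle = \beta\,V_1(\alpha)$, $\langle[V_1,V_3],\xi\rangle = (\kappa+1)\langle u_1,\xi\rangle = 0$ and $\langle[V_2,V_3],\xi\rangle = \beta\,(\alpha - V_3(\alpha))$ — notice that the apparent $\kappa$-dependence cancels — so that the integrability conditions reduce to $V_1(\alpha)=0$ and $V_3(\alpha)-\alpha=0$. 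These two cases are entirely routine.

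The main work, and the only genuinely delicate case, is case~(iii), where $\kappa=2$ and both $\beta$ and $\delta$ vary subject to the constraint $\beta^2+d\delta^2=\varepsilon$ (the specialization of~\eqref{nxiA2} for $\gamma=0$). The key device is to differentiate this constraint along each $V_a$, obtaining $\beta\,V_a(\beta)+d\delta\,V_a(\delta)=0$, which lets me eliminate every occurrence of $V_a(\delta)$ in favour of $V_a(\beta)$ and repeatedly collapse the combination $\beta^2+d\delta^2$ to $\varepsilon$. With this, $\langle[V_1,V_2],\xi\rangle$ reduces to $-\varepsilon\,V_1(\beta)/\delta$, forcing $V_1(\beta)=0$; feeding this back, $\langle[V_1,V_3],\xi\rangle$ becomes $\beta\,V_1(\alpha)$, forcing $V_1(\alpha)=0$; and the remaining bracket, after substituting $d\,V_3(\delta)=-\beta V_3(\beta)/\delta$ and using $\beta^2/\delta+d\delta=\varepsilon/\delta$, collapses to exactly
\[
\langle[V_2,V_3],\xi\rangle = \beta\,V_2(\alpha)-\alpha\,V_2(\beta)+\frac{\varepsilon}{\delta}\,V_3(\beta)+\alpha\beta^2,
\]
which is precisely the stated third condition. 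The bookkeeping in $[V_2,V_3]$ — where the structure constants $[u_2,u_4]=2u_2$ and $[u_3,u_4]=u_3$ (for $\kappa=2$) interact with the directional derivatives of $\alpha$, $\beta$ and $\delta$ — is where I expect essentially all the computational difficulty to concentrate.

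Finally, since Frobenius yields an equivalence and Lemma~\ref{necA2} already yields the equivalence for the Codazzi property once the hypersurface is present, the conjunction of the algebraic conditions of Lemma~\ref{necA2} with the three integrability equations is both necessary and sufficient, which establishes the three displayed sets of conditions.
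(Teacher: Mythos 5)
Your proposal is correct and is in essence the paper's own argument: both combine the algebraic conditions of Lemma~\ref{necA2} with the integrability of $\mathrm{span}\{\xi\}^{\perp}$, and your bracket computations in all three cases (including the cancellation of $\kappa$ in case~(ii) and the use of $\beta V_a(\beta)+d\delta V_a(\delta)=0$ to collapse $\beta^2+d\delta^2$ to $\varepsilon$ in case~(iii)) reproduce exactly the stated conditions. The only cosmetic difference is that you verify involutivity directly through $\langle[V_a,V_b],\xi\rangle=0$, whereas the paper checks the equivalent condition that $S=-\nabla\xi$ is self-adjoint on $\mathrm{span}\{V_1,V_2,V_3\}$ (the two agree since $\langle[V_a,V_b],\xi\rangle=\langle SV_a,V_b\rangle-\langle SV_b,V_a\rangle$), which has the side benefit of producing the explicit shape operator reused in Theorem~\ref{parA2}.
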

\begin{proof}
We treat the three cases listed in Lemma \ref{necA2} separately. Remark that the integrability of the distribtution $\mathrm{span}\{\xi\}^{\perp}=\mathrm{span}\{V_1,V_2,V_3\}$ is equivalent to the symmetry of the shape operator $S: \mathrm{span}\{V_1,V_2,V_3\} \to \mathrm{span}\{V_1,V_2,V_3\} : V \mapsto -\nabla_V\xi$.

\smallskip
{\em Case} (i). Remark that $\delta$ is a non-zero constant. Using \eqref{lambdaA2} and \eqref{basisA2i}, we express $SV_j=-\nabla_{V_j}\xi$ in the basis $\{V_1,V_2,V_3\}$ for every $j=1,2,3$. We find
\begin{equation}\label{sA2i}
\left\{
\begin{array}{l}
SV_1 =   -(\kappa\delta + V_1(\alpha))V_1, \\[4 pt]
SV_2 = - V_2(\alpha) V_1   -  \kappa \delta V_2, \\[4 pt]
SV_3 =  \left(-V_3(\alpha) + \alpha^2-\varepsilon \right) V_1 -\kappa\delta V_3.
\end{array}
\right.
\end{equation}
It follows that $S$ is symmetric with respect to the metric \eqref{metA2} if and only if $V_1(\alpha)=V_2(\alpha)=0$. 

\smallskip 
{\em Case} (ii). In this case $\beta \neq 0$ is a constant and the shape operator $S$ with respect to the basis \eqref{basisA2ii} is given by
\begin{equation}\label{sA2ii}
\left\{
\begin{array}{l}
SV_1 =  - V_1(\alpha) V_1, \\[2 pt]
SV_2 =  -V_2(\alpha) V_1, \\[2 pt]
SV_3 =  \left(\alpha - V_3(\alpha) \right) V_1,
\end{array}
\right.
\end{equation}
which is symmetric with respect to \eqref{metA2} if and only if $V_1(\alpha)=V_3(\alpha)-\alpha=0$. 

\smallskip
{\em Case} (iii). Using $\beta^2 + d \delta^2 = \varepsilon$ to eliminate derivatives of $\delta$,  the shape operator with respect to \eqref{basisA2iii} is given by 
\begin{equation}\label{sA2iii}
\left\{
\begin{array}{l}
SV_1 =  -(V_1(\alpha)+2\delta) V_1 -\frac{1}{d\delta}V_1(\beta) V_2, \\[2 pt]
SV_2 =  -(V_2(\alpha)+\alpha\beta) V_1  - \left(2\delta+\frac{V_2(\beta)}{d\delta} \right)V_2, \\[2 pt]
SV_3 =  -\left(V_3(\alpha) +\beta \delta \right) V_1 - \frac{V_3(\beta)}{d\delta} V_2 - 2\delta V_3,
\end{array}
\right.
\end{equation}
which is symmetric if and only if $V_1(\alpha) =V_1(\beta) =0$ and $(V_2(\alpha)+\alpha\beta)\beta - V_2(\beta)\alpha + \frac{\varepsilon}{\delta}V_3(\beta) =0$.
\end{proof}

We can now classify parallel and totally geodesic hypersurfaces of a non-reductive homogeneous pseudo-Riemannian space of type~\textbf{A2}. 

\begin{theorem}\label{parA2}
In the notations of Lemma~{\rm\ref{necA2}}, the hypersurface $\Sigma$ is parallel if and only if one of the following sets of conditions holds.
\begin{itemize}
\item[(1)]  $\beta=\gamma=0$, $d\delta^2 = \varepsilon$ and
\[
V_1(\alpha)=V_2(\alpha)=0, \quad V_3(\varphi) = 2(\kappa+1)\alpha \varphi
\]
where $V_1,V_2$ and $V_3$ are the functions given in \eqref{basisA2i} and we put $\varphi = \alpha^2- \varepsilon -V_3(\alpha)$. In particular, $\Sigma$ is totally geodesic if and only if $V_1(\alpha) = V_2(\alpha) =0$ and $\kappa=\varphi = 0$.
\vspace{1mm} 
\item[(2)] $\gamma=\delta=0$, $\beta^2=\varepsilon=1$ and either
\[
V_1(\alpha)=V_3(\alpha)-\alpha=0,\quad V_2(\alpha) =0,
\]
or
\[
\kappa=0 \quad \mbox{and} \quad V_1(\alpha)=V_3(\alpha)-\alpha=0, \quad V_2(V_2(\alpha)) =0, \quad V_2(\alpha) \neq 0,
\]
where $V_1,V_2$ and $V_3$ are the functions given in \eqref{basisA2ii}. In the former case, $\Sigma$ is always totally geodesic, in the latter case, it is never totally geodesic.
\vspace{1mm} 
\item[(3)] $\kappa=2$, $\gamma =0$, $\beta\neq 0$, $\delta\neq 0$, $\beta^2+d\delta^2 = \varepsilon$ and
\[
V_1(\beta)=V_2(\beta) = V_3(\beta) = 0,\quad V_1(\alpha) = 0,\quad V_2(\alpha)+\alpha\beta = 0,\quad V_3(\alpha)+\beta\delta =0,
\]
where $V_1,V_2$ and $V_3$ are the functions given in \eqref{basisA2iii}. These hypersurfaces are never totally geodesic.
\end{itemize}
\end{theorem}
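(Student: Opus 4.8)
The plan is to use that every parallel hypersurface has a Codazzi second fundamental form, so we may start from the three cases of Theorem~\ref{codA2} and, in each, impose only the remaining condition $\nabla^\Sigma S=0$. In each case the shape operator is already written in the tangent frame $\{V_1,V_2,V_3\}$ by \eqref{sA2i}, \eqref{sA2ii}, \eqref{sA2iii}, under the Codazzi relations of Theorem~\ref{codA2}. To evaluate $\nabla^\Sigma S$ I would first compute the intrinsic Levi-Civita connection in this frame: using the absolute derivative $D$ attached to the $\lambda$-maps \eqref{lambdaA2} (specialised to $a=b=1$, $c=0$) one gets $D_{V_i}V_j$, and projecting onto $\{\xi\}^{\perp}$ with respect to the metric \eqref{metA2} yields the connection coefficients $\nabla^\Sigma_{V_i}V_j=(D_{V_i}V_j)^{\top}$. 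The parallel condition then reads $(\nabla^\Sigma_{V_i}S)V_j=\nabla^\Sigma_{V_i}(SV_j)-S(\nabla^\Sigma_{V_i}V_j)=0$ for all $i,j$, and expanding it while simplifying with the already-imposed first-order Codazzi relations (and the structure functions of the frame) should produce exactly the stated conditions.

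Carrying this out, I expect the following in case~(1), where $\delta$ is a nonzero constant: the Codazzi relations $V_1(\alpha)=V_2(\alpha)=0$ collapse \eqref{sA2i} to $S=-\kappa\delta\,\mathrm{Id}+\varphi\,E$, with $\varphi=\alpha^2-\varepsilon-V_3(\alpha)$ and $E$ the nilpotent endomorphism sending $V_3\mapsto V_1$ and $V_1,V_2\mapsto0$. The scalar part $-\kappa\delta\,\mathrm{Id}$ is automatically parallel, so $\nabla^\Sigma S=0$ reduces to $\nabla^\Sigma(\varphi E)=0$; the equations coming from the $V_1$- and $V_2$-directions hold identically once $V_1(\alpha)=V_2(\alpha)=0$, and the single surviving equation is $V_3(\varphi)=2(\kappa+1)\alpha\varphi$. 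Since totally geodesic means $S=0$, i.e. $\kappa\delta=0$ and $\varphi=0$, and $\delta\neq0$, this forces $\kappa=0$, after which the parallel equation is trivially satisfied.

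In case~(2), $\beta$ is a nonzero constant and the Codazzi relations reduce \eqref{sA2ii} to $SV_2=-V_2(\alpha)V_1$, all other images vanishing. Imposing $\nabla^\Sigma S=0$ then bifurcates: either $V_2(\alpha)=0$, giving $S=0$ (totally geodesic), or $V_2(\alpha)\neq0$, in which case the off-diagonal parallel equations force $\kappa=0$ together with the second-order condition $V_2(V_2(\alpha))=0$; this is the only place a genuinely second-order equation enters. In case~(3), with $\kappa=2$ and $\delta\neq0$, starting from \eqref{sA2iii} under $V_1(\alpha)=V_1(\beta)=0$ the parallel equations force all off-diagonal coefficients and all $V_1$-components to vanish, i.e. $V_2(\beta)=V_3(\beta)=0$, $V_2(\alpha)+\alpha\beta=0$ and $V_3(\alpha)+\beta\delta=0$; combined with the Codazzi relation these collapse $S$ to $-2\delta\,\mathrm{Id}$, which is parallel and, since $\delta\neq0$, never zero, so these hypersurfaces are proper parallel.

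The main obstacle is the bookkeeping: computing the connection coefficients $\nabla^\Sigma_{V_i}V_j$ in each frame and tracking the derivatives of $\alpha$ (and $\beta$), so as to recognise which components of $\nabla^\Sigma S$ vanish automatically by the Codazzi relations and to isolate the single new equation in each case. The $\kappa$-dependence of \eqref{lambdaA2} must be followed carefully, since it governs both the factor $2(\kappa+1)$ in case~(1) and the forced vanishing $\kappa=0$ in case~(2); verifying that the off-diagonal components of $\nabla^\Sigma S$ demand precisely the second-order relation $V_2(V_2(\alpha))=0$ (and nothing more) is the subtlest point of the argument.
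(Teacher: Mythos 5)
Your proposal is correct and follows essentially the same route as the paper: start from the three Codazzi cases of Theorem~\ref{codA2}, compute the induced connection $\nabla^\Sigma_{V_i}V_j$ in the adapted frames \eqref{basisA2i}--\eqref{basisA2iii} via the $\lambda$-maps, and impose $\nabla^\Sigma_{V_i}(SV_j)=S(\nabla^\Sigma_{V_i}V_j)$, which yields exactly the stated conditions (your decomposition $S=-\kappa\delta\,\mathrm{Id}+\varphi E$ in case~(1) is only a mild repackaging of the paper's direct computation). The expected outcomes in all three cases, including the bifurcation in case~(2) and the collapse to $S=-2\delta\,\mathrm{Id}$ in case~(3), agree with the paper's proof.
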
 

\begin{proof}
We consider cases (i), (ii) and (iii) listed in Theorem~\ref{codA2}. For case (i), with respect to the basis $\{V_1,V_2,V_3\}$  described in \eqref{basisA2i}, the shape operator is by \eqref{sA2i} given by
\begin{equation} \label{SA2i}
\left\{
\begin{array}{l}
SV_1 =  -\kappa\delta V_1, \\[2 pt]
SV_2 =  -\kappa\delta V_2, \\[2 pt]
SV_3 =  \varphi V_1 - \kappa\delta V_3,
\end{array}
\right.
\end{equation}
where $\varphi=\alpha^2- \varepsilon ^2-V_3(\alpha)$ and we took into account $V_1(\alpha)=V_2(\alpha)=0$. We then use \eqref{basisA2i} and \eqref{lambdaA2} to describe the connection $\nabla^{\Sigma}$ with respect to $\{V_1,V_2,V_3\}$. A long but straightforward calculation gives that the possibly non vanishing covariant derivatives $\nabla^\Sigma _{V_i} V_j$ are given by
\[
\begin{array}{ll}
\nabla^\Sigma _{V_2} V_2 = \frac{\kappa \alpha}{d\delta} V_1, \qquad &
\nabla^\Sigma _{V_2} V_3 = \kappa \alpha V_2, \\[6 pt]
\nabla^\Sigma _{V_3} V_1 = -(\kappa+1) \alpha V_1, \qquad &
\nabla^\Sigma _{V_3} V_3 = \psi V_1 +\alpha(\kappa+1) V_3,
\end{array}
\]
for a certain function $\psi$, which is unimportant for this computation. Then, $\Sigma$ is parallel if and only if $\nabla^\Sigma _{V_i} (SV_j)=S (\nabla^\Sigma _{V_i} V_j)$ for all indices $i,j$. By a direct calculation, this holds if and only if 
\[
V_1(\varphi) = V_2(\varphi) = 0,\quad V_3(\varphi) = 2(\kappa+1)\alpha \varphi.
\]
The first two equations follow from $V_1(\alpha)=V_2(\alpha)=0$. In particular, from \eqref{SA2i} we see that $\Sigma$ is totally geodesic if and only if $\kappa=\varphi=0$ and this completes case (1). 

Next, in case (ii), the shape operator is by \eqref{sA2ii} given by
\begin{equation*}
\left\{
\begin{array}{l}
SV_1 =  0, \\[2 pt]
SV_2 =  - V_2(\alpha)V_1, \\[2 pt]
SV_3 =  0,
\end{array}
\right.
\end{equation*}
%
so that $\Sigma$ is totally geodesic if and only if $V_2(\alpha)=0$. Next, \eqref{lambdaA2} yields that the connection $\nabla^{\Sigma}$ is completely determined by
\[
\begin{array}{ll}
\nabla^\Sigma _{V_1} V_2 = \nabla^\Sigma _{V_2} V_1 = \frac{\kappa \beta}{d} V_3, \qquad &
\nabla^\Sigma _{V_1} V_3 = \kappa V_1, \\[6 pt]
\nabla^\Sigma _{V_2} V_2 =-\frac{\alpha}{\beta}V_2(\alpha) V_1- \frac 1d \left(\kappa \alpha^2+(\kappa-1)\beta ^2 \right)V_3, \qquad &
\nabla^\Sigma _{V_2} V_3 = \beta V_1+\kappa V_2, \\[6 pt]
\nabla^\Sigma _{V_3} V_2 = \beta V_1+V_2,\qquad &
\nabla^\Sigma _{V_3} V_1 = -V_1, \\[6 pt]
\end{array}
\]

%
\vspace{1mm}\noindent
If $\Sigma$ is parallel, then 
%
%
\[
0 = S(\nabla^\Sigma_{V_2} V_2 ) = \nabla^\Sigma_{V_2} (SV_2) = \nabla^\Sigma_{V_2} (-V_2(\alpha) V_1) = -V_2(V_2(\alpha))V_1 - V_2(\alpha)\frac{\kappa \beta}{d} V_3.
\]
This implies, $V_2(\alpha)=0$ and either $\Sigma$ is totally geodesic or $V_2(V_2(\alpha))=0$ and  $\kappa = 0$. A quick check shows that under one of these conditions $\nabla^\Sigma_{V_i} (SV_j) = S(\nabla^\Sigma_{V_i} V_j)$ for all indices $i,j$.

Finally, in case (iii) the shape operator is by \eqref{sA2iii} given by
\[
\left\{
\begin{array}{l}
SV_1 =  -2\delta V_1, \\[2 pt]
SV_2 =  -(V_2(\alpha)+\alpha\beta) V_1  - \left(2\delta+\frac{V_2(\beta)}{d\delta} \right)V_2, \\[2 pt]
SV_3 =  -\left(V_3(\alpha) +\beta \delta \right) V_1 - \frac{V_3(\beta)}{d\delta} V_2 - 2\delta V_3,
\end{array}
\right.
\]
where we used the conditions $V_1(\alpha) = V_1(\beta)=0$ to simplify $S$. We then have the following components of the connection $\nabla^\Sigma$:
\begin{align*}
\nabla^\Sigma_{V_2} V_1 &= \beta V_1,\\
\nabla^\Sigma_{V_3} V_1 &= -2\varepsilon\alpha\beta\delta V_1-\frac{2\varepsilon \beta^2}{d}V_2,\\
\nabla^\Sigma_{V_3} V_2 &= \left(\frac{\varepsilon\alpha V_3(\beta) + 2d\alpha^2\delta^3}{\varepsilon\delta} - \beta^2\right)V_1  + \frac{2\alpha\beta\delta}{\varepsilon}V_2 - 2\beta V_3.
\end{align*}
%
A straightforward computation shows that $S(\nabla^\Sigma_{V_2} V_1) = \nabla^\Sigma_{V_2} S(V_1)$, $S(\nabla^\Sigma_{V_3} V_1 ) = \nabla^\Sigma_{V_3} S(V_1)$ and $S(\nabla^\Sigma_{V_3} V_2) = \nabla^\Sigma_{V_3} S(V_2)$ imply that the shape operator simplifies to
\[
\left\{
\begin{array}{l}
SV_1 =  -2\delta V_1, \\[2 pt]
SV_2 =  -2\delta V_2, \\[2 pt]
SV_3 =  -2\delta V_3,
\end{array}
\right.
\]
where $\delta$ is a constant. Clearly, this is also sufficient for $\Sigma$ to be parallel. Furthermore, we see that $\Sigma$ is totally geodesic if and only if we are either in case (i) or in case (ii).
\end{proof}

We shall now investigate in greater detail the examples we obtain under the assumption that functions  $\alpha,\beta,\gamma$ and $\delta$ are constant.

\begin{example}
	Suppose now that $\alpha,\beta,\gamma$ and $\delta$ are some real constants. We then  find the following examples of parallel hypersurfaces.
	\begin{itemize}
		\item If $\beta =\gamma = 0$, $d\delta^2=\varepsilon =\pm 1$ and $(\kappa+1)(\alpha^2-\varepsilon)\alpha=0$, then the images of the constant functions $V_1,V_2$ and $V_3$ span a Lorentzian (of signature either $(2,1)$ or $(1,2)$) subalgebra $\mathfrak{k}_1 \cong e(1,1)$ of $\g$. Thus, under these assumptions we find a  Lorentzian parallel hypersurface, which is an orbit of a Lorentzian Lie subgroup $K_1\subseteq G$ with Lie subalgebra $\mathfrak{k}_1$ and  satisfies conditions (1).
		\item If $\alpha=\gamma=\delta=0$ and $\beta^2=\varepsilon=1$, then the image of the constant functions $V_1,V_2$ and $V_3$ spans a Lorentzian subalgebra $\mathfrak{k}_2 \cong e(1,1)$ of $\g$. So, under these assumptions we find a Lorentzian parallel hypersurface, which is an orbit of a Lorentzian Lie subgroup $K_2\subseteq G$ with Lie subalgebra $\mathfrak{k}_2$ and satisfies conditions (2).
	\end{itemize}
\end{example}

%

\section{Hypersurfaces of homogeneous spaces of type \textbf{A3}}
\setcounter{equation}{0}
Let $F: \Sigma \to (M,g)$ be the immersion of a hypersurface into a non-reductive homogeneous pseudo-Riemannian four-manifold of type \textbf{A3}. As in the previous sections, we start by determining necessary algebraic conditions for hypersurfaces to have a Codazzi second fundamental form. 

\begin{lemma} \label{necA3}
Let $(M,g)$ be a non-reductive homogeneous pseudo-Riemannian four-manifold of type {\rm\bf A3} and $F : \Sigma \to M$ a hypersurface of $M$. Denote by $\xi=\alpha u_1+\beta u_2+\gamma u_3+\delta u_4$ a vector field normal to $\Sigma$, for some functions $\alpha, \beta, \gamma, \delta : G \to \R$, with $\langle \xi,\xi \rangle =\varepsilon=\pm 1$. The hypersurface $\Sigma$ has a Codazzi second fundamental form if and only if one of the following sets of conditions holds: 
\begin{itemize}
\item[(i)] $\gamma=\delta =0$;
\item[(ii)] $\beta=\delta=0$.
\end{itemize}
\end{lemma}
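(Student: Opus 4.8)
The plan is to apply the characterization recorded just after \eqref{eC}: the hypersurface $\Sigma$ has a Codazzi second fundamental form if and only if $R(X,Y)\xi=0$ for every pair of vector fields $X,Y$ tangent to $\Sigma$, i.e.\ for all functions $X,Y\colon G\to\m$ with $X,Y\perp\xi$. Because the curvature operators $R(u_i,u_j)$ are the explicit endomorphisms listed in \eqref{RA3}, each $R(X,Y)\xi$ is obtained by expanding $R(X,Y)=\sum_{i<j}(x_iy_j-x_jy_i)\,R(u_i,u_j)$ by bilinearity and antisymmetry and then applying the resulting matrix to the coordinate vector $(\alpha,\beta,\gamma,\delta)$ of $\xi$. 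First I would read off from \eqref{gA3} that $\langle X,\xi\rangle=a\delta\,x_1+a\beta\,x_2+(b\gamma+c\delta)\,x_3+(a\alpha+c\gamma+d\delta)\,x_4$, which makes the orthogonality constraint cutting out $\xi^\perp$ fully explicit, and I would keep in force the standing hypothesis of Remark~\ref{rem:symmetric cases} that $d+\eta b\neq0$ (otherwise the space is locally symmetric).

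For \emph{necessity} I would argue in two steps, both exploiting the factor $d+\eta b$. To force $\delta=0$, suppose $\delta\neq0$. Then the two functions $A=-\beta u_1+\delta u_2$ and $C=-(a\alpha+c\gamma+d\delta)u_1+a\delta u_4$ are nonzero and perpendicular to $\xi$, hence tangent to $\Sigma$. Expanding $R(A,C)\xi$ with \eqref{RA3} and simplifying, all but the $u_1$- and $u_2$-components cancel, and the $u_2$-component equals a nonzero scalar multiple of $\delta^3(d+\eta b)$; the Codazzi condition $R(A,C)\xi=0$ therefore forces $\delta^3(d+\eta b)=0$, contradicting $\delta\neq0$ and $d+\eta b\neq0$. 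Hence $\delta=0$. With $\delta=0$, to force $\beta\gamma=0$ I would suppose $\beta\neq0$ and $\gamma\neq0$ and use the two perpendicular functions $P=b\gamma u_2-a\beta u_3$ and $Q=(a\alpha+c\gamma)u_2-a\beta u_4$; the same type of computation gives that $R(P,Q)\xi$ has only a $u_1$-component, equal to a nonzero scalar multiple of $\beta^2\gamma(d+\eta b)$, which again cannot vanish. Thus $\delta=0$ and $\beta\gamma=0$, which is precisely the disjunction of (i) and (ii).

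For \emph{sufficiency} I would exhibit, in each case, an explicit basis of $\xi^\perp$ and check that the curvature kills $\xi$ on every pair. In case (i), where $\gamma=\delta=0$, the functions $u_1$, $u_3$ and $\alpha u_2-\beta u_4$ form a basis of $\xi^\perp$; in case (ii), where $\beta=\delta=0$, the functions $u_1$, $u_2$ and $(a\alpha+c\gamma)u_3-b\gamma u_4$ do. In both cases a direct evaluation of $R(V_i,V_j)\xi$ on the three pairs of basis functions, using \eqref{RA3}, shows that each one vanishes identically, so $\Sigma$ is Codazzi.

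The step I expect to be the crux is the choice of test pairs in the necessity argument. Most natural perpendicular pairs—for instance $(u_1,P)$, $(u_1,Q)$, or the pair $(A,B)$ with $B=-(b\gamma+c\delta)u_1+a\delta u_3$—produce $R(X,Y)\xi\equiv0$ and yield no information whatsoever. The productive move is to isolate exactly the pairs whose curvature evaluation survives as a nonzero multiple of $\delta^3(d+\eta b)$, respectively $\beta^2\gamma(d+\eta b)$; once these are identified, the factor $d+\eta b$ supplied by the non-locally-symmetric hypothesis closes the argument immediately, and the remaining computations are short.
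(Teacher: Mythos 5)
Your proposal is correct and follows essentially the same route as the paper: it uses the criterion that the second fundamental form is Codazzi iff $R(X,Y)\xi=0$ for tangent $X,Y$, derives the necessary conditions by evaluating \eqref{RA3} on explicitly chosen perpendicular test pairs (your computations, e.g.\ that the surviving components are multiples of $\delta^3(d+\eta b)$ and $\beta^2\gamma(d+\eta b)$, check out), and verifies sufficiency on the same bases \eqref{basisA3i} and \eqref{basisA3ii}. The only cosmetic difference is the order of elimination (you kill $\delta$ first and then $\beta\gamma$, whereas the paper first gets $\beta\delta=0$ and then splits into subcases), and your use of the non-symmetry condition $d+\eta b\neq 0$ is actually the consistent form of what the paper's proof (with an apparent typo) calls $d+\eta a\neq 0$.
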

\begin{proof} Equation \eqref{gA3} yields that  
\[\langle X,\xi\rangle=a\delta x_1+a \beta x_2+(b\gamma +c\delta)x_3+(a\alpha +c\gamma+d\delta) x_4,\]
for an arbitrary function $X=x_1u_1+x_2u_2+x_3u_3+x_4u_4$. Thus, 
\begin{equation}\label{nxiA3}
\langle \xi,\xi\rangle=2a \alpha\delta +a\beta^ 2+b\gamma ^2 +2c\gamma \delta +d\delta^2 = \varepsilon
\end{equation}
and $X$ is perpendicular to $\xi$ if and only if 
\[
\langle X,\xi\rangle=a\delta x_1+a \beta x_2+(b\gamma +c\delta)x_3+(a\alpha +c\gamma) x_4=0.
\]
Let $X=\beta u_1 - \delta u_2$ and $Y=(a\alpha + c\gamma + d\delta)u_2 - a\beta u_4$. Then $X$ and $Y$ are perpendicular to $\xi$. Requiring $R(X,Y)\xi=0$, by \eqref{RA3} we find that either $\beta=0$ or $\delta=0$, where we excluded the case of constant sectional curvature $d+\eta a  = 0$.

Suppose $\beta =0$ and let $Z = a\delta u_4 - (a\alpha+c\gamma+d\delta)u_1$. Then $Z$ is perpendicular to $\xi$ and $R(X,Z)\xi = 0$ implies $\delta = 0$, which gives case (ii).

Suppose $\beta \neq 0$, then automatically $\delta = 0$. Let $W = a\beta u_3- (b\gamma+c\delta)u_2$, this is perpendicular to $\xi$ and $R(Y,W)\xi = 0$ implies $\gamma = 0$, which gives case (i). 

For the converse, remark that the following functions span the orthogonal complement of $\xi(g)$ for every $g\in G$:
\begin{align}
& \mbox{Case (i)}: & V_1=u_1, &&& V_2=u_3, && V_3=\alpha u_2 - \beta u_4, \label{basisA3i} \\
& \mbox{Case (ii)}: & V_1=u_1, &&& V_2=u_2, && V_3=(a\alpha+c\gamma)u_3 -b\gamma u_4, \label{basisA3ii} 
\end{align}
It is now sufficient to check that, in each of the three cases, $R(V_i,V_j)\xi =0$ for all $i,j \in \{1,2,3\}$.
\end{proof}
%

\begin{theorem}\label{codA3}
In the same notations of Lemma~{\em \ref{necA3}}, the hypersurface $\Sigma$ has Codazzi second fundamental form if and only if one of the following sets of conditions holds: 
\begin{itemize}
\item[(i)] $\gamma=\delta=0$, $a\beta^2 = \varepsilon$ and  \[
V_1(\alpha)=V_2(\alpha)-\alpha=0, 
\]
where $\{V_1,V_2,V_3\}$ is described in \eqref{basisA3i}.
\vspace{1mm}
\item[(ii)] $\beta = \delta = 0$, $b\gamma^2 = \varepsilon$ and 
\[
V_1(\alpha)=V_2(\alpha)=0,
\]
where $\{V_1,V_2,V_3\}$ is described in \eqref{basisA3ii}.
\end{itemize}
\end{theorem}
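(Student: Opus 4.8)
The plan is to follow verbatim the strategy already used for type \textbf{A2} in Theorem~\ref{codA2}. Lemma~\ref{necA3} isolates the two algebraic cases (i)~$\gamma=\delta=0$ and (ii)~$\beta=\delta=0$ in which $R(X,Y)\xi=0$ for all $X,Y\perp\xi$; what remains is to decide, \emph{within} each case, exactly when $\xi$ is the unit normal of a genuine hypersurface, i.e.\ when the distribution $\mathrm{span}\{\xi\}^\perp=\mathrm{span}\{V_1,V_2,V_3\}$ is integrable. As recalled in the proof of Theorem~\ref{codA2}, integrability of $\{V_1,V_2,V_3\}$ is equivalent to the symmetry of the shape operator $S\colon V\mapsto -\nabla_V\xi=-D_V\xi$ with respect to the induced metric, because $\langle[V,W],\xi\rangle=\langle SV,W\rangle-\langle SW,V\rangle$. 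Thus the entire proof reduces to computing $S$ on the basis $\{V_1,V_2,V_3\}$ and reading off when it is self-adjoint; the differential conditions in the statement are precisely these integrability conditions.

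First I would record the normalization. Specializing \eqref{nxiA3} to each case immediately forces the surviving transverse component to be constant: $a\beta^2=\varepsilon$ in case~(i) and $b\gamma^2=\varepsilon$ in case~(ii). This is the crucial simplification, since it kills the derivative terms $V_j(\beta)$, respectively $V_j(\gamma)$, in the absolute differential, leaving $D_{V_j}\xi=V_j(\alpha)\,u_1+\lambda(V_j)\xi$ throughout. It also records that $\beta\neq 0$ in case~(i) and $\gamma\neq 0$ in case~(ii), which will be needed to cancel factors later.

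Next, using the matrices \eqref{lambdaA3} I would compute $\lambda(V_j)\xi$ for $j=1,2,3$ and re-express $S(V_j)=-D_{V_j}\xi$ in the basis of \eqref{basisA3i}, \eqref{basisA3ii}. The structural point that makes this tractable is that the curvature-free directions collapse onto $V_1=u_1$: in case~(i) one finds $\lambda(u_1)\xi=0$, $\lambda(u_3)\xi=-\alpha u_1$ and $\lambda(V_3)\xi=-\alpha\beta\,u_1$, so that every $S(V_j)$ lies in $\mathrm{span}\{V_1\}$, the $V_1$-coefficients being controlled by $V_1(\alpha)$, $V_2(\alpha)-\alpha$ and $V_3(\alpha)-\alpha\beta$; in case~(ii) the $(u_3,u_4)$-part of $\lambda(V_3)\xi$ turns out to equal $\gamma V_3$ exactly, so $S$ is triangular with constant diagonal $-\gamma$ with respect to the flag $\mathrm{span}\{V_1\}\subset\mathrm{span}\{V_1,V_2\}\subset\mathrm{span}\{V_1,V_2,V_3\}$, the off-diagonal entries being governed by $V_1(\alpha)$ and $V_2(\alpha)$.

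Finally I would read off the induced metric from \eqref{gA3}; the essential feature here is that $u_1$ is null and orthogonal to $u_2$ and $u_3$, so that $\langle V_1,V_1\rangle=\langle V_1,V_2\rangle=0$ while $\langle V_1,V_3\rangle\neq 0$. Imposing $\langle SV_i,V_j\rangle=\langle SV_j,V_i\rangle$ then gives, after cancelling the nonzero factors $a\beta$ in case~(i) and $ab\gamma$ in case~(ii), exactly $V_1(\alpha)=0$ together with $V_2(\alpha)-\alpha=0$ in case~(i), and $V_1(\alpha)=V_2(\alpha)=0$ in case~(ii). I expect the main obstacle to be bookkeeping rather than conceptual: one must re-expand $S(V_3)$ correctly in the basis and handle the degeneracy of the metric with care, since the null pairings render several of the symmetry equations vacuous and it is only the pairings against $V_3$ that actually carry information.
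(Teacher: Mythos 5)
Your proposal is correct and follows essentially the same route as the paper: specialize \eqref{nxiA3} to make $\beta$ (resp.\ $\gamma$) a nonzero constant, compute $S V_j=-D_{V_j}\xi$ via \eqref{lambdaA3} in the bases \eqref{basisA3i}, \eqref{basisA3ii}, and impose self-adjointness, which only produces conditions from the pairings against $V_3$; your intermediate values of $\lambda(V_j)\xi$ and the resulting expressions for $S$ agree with \eqref{sA3i} and \eqref{sA3ii}. The only slip is descriptive: in case (ii) the $(1,1)$-entry of $S$ is $-(V_1(\alpha)+\gamma)$, so $V_1(\alpha)$ sits on the diagonal rather than off it, but the symmetry equation $\langle SV_1,V_3\rangle=\langle SV_3,V_1\rangle$ still forces $V_1(\alpha)=0$ exactly as you conclude.
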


\begin{proof}
We first consider $\gamma=\delta =0$. Equation \eqref{nxiA3} now yields $a\beta ^2 =\varepsilon$ and so, $\beta \neq 0$ is a constant. By \eqref{lambdaA3}, we obtain the shape operator with respect to the basis \eqref{basisA3i}:
\begin{equation}\label{sA3i}
\left\{
\begin{array}{l}
SV_1 =  -V_1(\alpha) V_1, \\[2 pt]
SV_2 =  -(V_2(\alpha)-\alpha) V_1, \\[2 pt]
SV_3 =  -(V_3(\alpha)-\alpha\beta)V_1.
\end{array}
\right.
\end{equation}
%
Hence, $S$ is self-adjoint if and only if $V_1(\alpha)=V_2(\alpha)-\alpha=0$. This is case (i).

Suppose that $\beta=\delta=0$. Then, \eqref{nxiA3} yields $b\gamma^2 = \varepsilon$ and so, $\gamma \neq 0$ is a constant. 
Using the description of the Levi-Civita connection given in \eqref{lambdaA3}, we find that the shape operator $S$ with respect to the basis from \eqref{basisA3ii} is as follows:
\begin{equation}\label{sA3ii}
\left\{
\begin{array}{l}
SV_1 =  -(V_1(\alpha)+\gamma) V_1, \\[2 pt]
SV_2 =  -V_2(\alpha)V_1 - \gamma V_2, \\[2 pt]
SV_3 =  \left(-V_3(\alpha) + a\alpha^2+2c\alpha\gamma + \frac{c^2\gamma^2-bd\gamma^2}{a}\right)V_1 - \gamma V_3.
\end{array}
\right.
\end{equation}
%
%
Hence, $S$ is self-adjoint if and only if $V_1(\alpha)=V_2(\alpha)=0$. This is case (ii).
\end{proof}

We can now classify parallel and totally geodesic hypersurfaces of a non-reductive homogeneous pseudo-Riemannian space of type \textbf{A3}. 

\begin{theorem}\label{parA3}
Let $(M,g)$ be a non-reductive homogeneous pseudo-Riemannian four-manifold of type {\rm\bf A3} and $F : \Sigma \to M$ a hypersurface of $M$. Denote by $\xi=\alpha u_1+\beta u_2+\gamma u_3+\delta u_4$ a vector field normal to $\Sigma$, with $\langle \xi ,\xi \rangle=\varepsilon=\pm 1$. Then, $\Sigma$ is a parallel hypersurface if and only if one of the following sets of conditions holds:
\begin{itemize}

\item[(1)]  $\gamma=\delta=0$, $a\beta^2=\varepsilon$ and
\[
V_1(\alpha)=V_2(\alpha)-\alpha=0,\quad V_3(\alpha) = 0
\]
where $\{V_1,V_2,V_3\}$ is described in \eqref{basisA3i}. In particular, these hypersurfaces are always totally geodesic.
\vspace{1mm} 
\item[(2)] $\beta=\delta=0$, $b\gamma^2=\varepsilon$ and
\[
V_1(\alpha)=V_2(\alpha)=0,\quad V_3(\varphi) = 4(a\alpha+c\gamma)\varphi
\]
where $\varphi=- V_3(\alpha)+a\alpha^2+2c\alpha\gamma +\frac{c^2-bd}{a}\gamma^2$ and $\{V_1,V_2,V_3\}$ is described in \eqref{basisA3ii}. These hypersurfaces are never totally geodesic.
\end{itemize}
\end{theorem}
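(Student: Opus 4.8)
The plan is to build directly on Theorem~\ref{codA3}, which already reduces everything to the two families of Codazzi hypersurfaces, and to upgrade the Codazzi condition to the parallel condition $\nabla^\Sigma S=0$ by computing the induced connection $\nabla^\Sigma$ explicitly in each case. The tools are exactly those used for type~\textbf{A2} (Theorem~\ref{parA2}): I express $S$ in the adapted frame $\{V_1,V_2,V_3\}$ of \eqref{basisA3i}, resp.\ \eqref{basisA3ii}, using the simplified forms \eqref{sA3i} and \eqref{sA3ii}; I then read off $\nabla^\Sigma_{V_i}V_j$ from the ambient $\lambda$-map \eqref{lambdaA3} via the absolute-derivative formalism of Section~\ref{sec:LC connection}, taking tangential projections along $\xi$ where needed; and finally I impose $(\nabla^\Sigma S)(V_i,V_j)=\nabla^\Sigma_{V_i}(SV_j)-S(\nabla^\Sigma_{V_i}V_j)=0$ for all $i,j\in\{1,2,3\}$ and extract the extra differential conditions.

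In case (i), the Codazzi relations $V_1(\alpha)=0$ and $V_2(\alpha)=\alpha$ collapse \eqref{sA3i} to $SV_1=SV_2=0$ and $SV_3=\mu V_1$ with $\mu=\alpha\beta-V_3(\alpha)$, so $S$ is a rank-one nilpotent operator with image $\mathrm{span}\{V_1\}$ and kernel $\mathrm{span}\{V_1,V_2\}$. The decisive step is the single connection component $\nabla^\Sigma_{V_3}V_1$: a short calculation with \eqref{lambdaA3} (no projection is needed here, since $u_1=V_1$ and $u_3=V_2$ are already tangent) gives $\nabla^\Sigma_{V_3}V_1=-\tfrac{\beta c}{b}V_1+\tfrac{\beta a}{b}V_2$, whose $V_2$-coefficient $\tfrac{\beta a}{b}$ is nonzero because $ab\neq0$ and $\beta\neq0$. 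Inspecting the $V_2$-part of $(\nabla^\Sigma_{V_3}S)V_3=V_3(\mu)V_1+\mu\,\nabla^\Sigma_{V_3}V_1-S(\nabla^\Sigma_{V_3}V_3)$, the only contribution along $V_2$ is $\tfrac{\mu\beta a}{b}V_2$, which forces $\mu=0$. Hence $S\equiv0$: this is precisely the extra condition of statement~(1), and it shows that every parallel hypersurface in this family is automatically totally geodesic, the residual equations $V_1(\mu)=0$, $V_2(\mu)=2\mu$ being then trivially satisfied.

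Case (ii) is structurally different, and this is where the \emph{proper parallel, never totally geodesic} behaviour comes from. With $V_1(\alpha)=V_2(\alpha)=0$, equation \eqref{sA3ii} gives $S=-\gamma\,\mathrm{Id}+N$, where $\gamma$ is a nonzero constant (so the trace part $-\gamma\,\mathrm{Id}$ is automatically parallel) and $N$ is the rank-one nilpotent operator $NV_3=\varphi V_1$, $NV_1=NV_2=0$, with $\varphi$ as in the statement; thus $\nabla^\Sigma S=\nabla^\Sigma N$ and parallelism reduces to $\nabla^\Sigma N=0$. The key contrast with case (i) is that here $u_3$ is \emph{not} tangent to $\Sigma$, since $\langle u_3,\xi\rangle=b\gamma\neq0$, and its tangential part is $-\tfrac{\alpha}{\gamma}V_1$; after projecting the ambient derivative along $\xi$ one finds $\nabla^\Sigma_{V_3}V_1=-2(a\alpha+c\gamma)V_1$, which now stays inside $\mathrm{span}\{V_1\}$. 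Consequently parallelism no longer forces $\varphi=0$; instead the $V_1$-component of $(\nabla^\Sigma_{V_3}N)V_3=0$, together with the $V_3$-coefficient of $\nabla^\Sigma_{V_3}V_3$ (which equals $2(a\alpha+c\gamma)$), yields exactly $V_3(\varphi)=4(a\alpha+c\gamma)\varphi$, while $V_1(\varphi)=V_2(\varphi)=0$ follow from $V_1(\alpha)=V_2(\alpha)=0$. Since $\gamma\neq0$ we have $SV_1=-\gamma V_1\neq0$, so these hypersurfaces are never totally geodesic, giving statement~(2).

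The main obstacle is bookkeeping rather than conceptual. One must compute all the tangential components $\nabla^\Sigma_{V_i}V_j$ in each case from \eqref{lambdaA3}, being especially careful with the $\xi$-projection in case (ii), and then verify \emph{sufficiency}: that the single derived condition really makes $(\nabla^\Sigma S)(V_i,V_j)=0$ for every pair $(i,j)$, not merely for $(3,3)$. Checking that the remaining components of $\nabla^\Sigma S$ vanish identically once $\mu=0$ (case (i)), respectively once $V_3(\varphi)=4(a\alpha+c\gamma)\varphi$ holds (case (ii)), is the step most prone to sign and projection errors, and is where the full structure of \eqref{lambdaA3} must be used.
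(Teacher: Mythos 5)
Your proposal is correct and follows essentially the same route as the paper: starting from the Codazzi cases of Theorem~\ref{codA3}, computing $\nabla^\Sigma_{V_i}V_j$ from \eqref{lambdaA3} in the frames \eqref{basisA3i} and \eqref{basisA3ii}, and extracting $\mu=0$ (hence $S=0$) in case (1) and $V_3(\varphi)=4(a\alpha+c\gamma)\varphi$ in case (2) from the $(V_3,V_3)$-component of $\nabla^\Sigma S$; your decomposition $S=-\gamma\,\mathrm{Id}+N$ in case (2) is a slightly cleaner packaging of the same computation. Note that your derived condition $V_3(\alpha)=\alpha\beta$ in case (1) agrees with the paper's own proof (which obtains exactly $V_3(\alpha)=\alpha\beta$, forcing $S=0$), rather than with the ``$V_3(\alpha)=0$'' written in the theorem statement, which appears to be a typographical slip.
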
 
\begin{proof}
We consider cases (i) and (ii) listed in Theorem~\ref{codA3}. For case (i), with respect to the basis $\{V_1,V_2,V_3\}$  described in \eqref{basisA3i}, the shape operator in \eqref{sA3i} is given by
\[
\left\{
\begin{array}{l}
SV_1 =  0, \\[2 pt]
SV_2 =  0, \\[2 pt]
SV_3 =  -(V_3(\alpha)-\alpha\beta)V_1.
\end{array}
\right.
\]
Next, we use \eqref{lambdaA3} to describe the connection $\nabla^{\Sigma}$ with respect to $\{V_1,V_2,V_3\}$. The possibly nonvanishing covariant derivatives $\nabla^\Sigma _{V_i} V_j$ are given by
\[
\begin{array}{ll}
\nabla^\Sigma _{V_1} V_2 =-\nabla^\Sigma _{V_2} V_1 =  V_1, \qquad &
\nabla^\Sigma _{V_1} V_3 = \nabla^\Sigma _{V_3} V_1 = -\frac cb \beta V_1 +\frac ab \beta V_2 , \\[8 pt]
\nabla^\Sigma _{V_2} V_3 = \nabla^\Sigma _{V_3} V_2 = \frac{bd-c^2}{ab} \beta V_1 +\frac cb \beta V_2 +V_3, \qquad &
\nabla^\Sigma _{V_3} V_3 = \left( \frac cb \alpha ^2 -\frac{\alpha}{\beta} V_3(\alpha) \right) V_1-\frac ab \alpha ^2 V_2 .\\[4 pt]
\end{array}
\]
It is now easy to check that $\nabla^\Sigma _{V_i} SV_j=S (\nabla^\Sigma _{V_i} V_j)$ for all indices $i,j$ if and only if $V_3(\alpha)=\alpha\beta$. This implies $S=0$ and so, $\Sigma$ is totally geodesic.

Next, in case (ii), the shape operator in \eqref{sA3ii} is described by
\[
\left\{
\begin{array}{l}
SV_1 =  -\gamma V_1, \\[2 pt]
SV_2 =  -\gamma V_2, \\[2 pt]
SV_3 =  -\varphi V_1 - \gamma V_3,
\end{array}
\right.
\]
the possibly nonvanishing covariant derivatives $\nabla^\Sigma _{V_i} V_j$ are given by 
\[
\begin{array}{ll}
\nabla^\Sigma _{V_2} V_2 = \left(\frac{a\alpha+c\gamma}{b\gamma} - 1 \right) V_1, \quad &
\nabla^\Sigma _{V_2} V_3 = (a\alpha+c\gamma-b\gamma)V_2,\\[6 pt]
\nabla^\Sigma _{V_3} V_1 = -2(a\alpha+c\gamma)V_1,\quad &
\nabla^\Sigma _{V_3} V_3 = \frac{-a^2\alpha V_3(\alpha) + 2(a\alpha+c\gamma)(a^2\alpha^2+(bd-c^2)\gamma^2)}{a\gamma})V_1 + 2(a\alpha + c\gamma)V_3.
\end{array}
\]
It is now easy to check that $\nabla^\Sigma _{V_i} SV_j=S (\nabla^\Sigma _{V_i} V_j)$ for all indices $i,j$ if and only if $V_3(\varphi) = 4(a\alpha+c\gamma)\varphi$. The hypersurface is never totally geodesic. 
\end{proof}

\begin{example}
	Suppose $\alpha,\beta,\gamma$ and $\delta$ are constant functions. Then we find the following parallel hypersurfaces.
	\begin{itemize}
		\item If $\alpha =\gamma = \delta = 0$ and $a\beta^2=\varepsilon$, then the image of the constant functions $V_1,V_2$ and $V_3$ spans a subalgebra  $\mathfrak{k}_1 \cong \mathbb{R} \times \mathfrak{s}(2)$ of $\g$, with $\mathfrak{s}(2)$ the solvable, non-abelian two-dimensional Lie algebra. Thus under these assumptions we find a Lorentzian parallel hypersurface, which is an orbit of a Lorentzian Lie subgroup $K_1 \subseteq G$ with Lie subalgebra $\mathfrak{k}_1$ and satisfies conditions (1).
		\item If $\beta = \delta = 0$, $b\gamma^2=\varepsilon$ and $(a\alpha + c\gamma)\varphi = 0$,  then the image of the constant functions $V_1,V_2,V_3$ together with the isotropy algebra span a pseudo-Riemannian subalgebra  
		$\mathfrak{k}_2 \cong e(1,1)$ of $\g$. So, under these assumptions we find a Lorentzian parallel hypersurface, which is an orbit of a Lorentzian Lie subgroup $K_2 \subseteq G$ with Lie subalgebra $\mathfrak{k}_2$ and satisfies conditions (2).
	\end{itemize}
\end{example}

%

\section{Hypersurfaces of homogeneous spaces of type \textbf{A4} and \textbf{B2}}
\setcounter{equation}{0}

\begin{lemma} \label{necA4}
Let $(M,g)$ be a non-reductive homogeneous pseudo-Riemannian four-manifold of type {\rm\bf A4} or {\rm\bf B2} and $F : \Sigma \to M$ a hypersurface of $M$. Denote by $\xi=\alpha u_1+\beta u_2+\gamma u_3+\delta u_4$ a vector field normal to $\Sigma$, for some functions $\alpha, \beta, \gamma, \delta : M \to \R$, with $\langle \xi,\xi \rangle =\varepsilon=\pm 1$. The hypersurface $\Sigma$ has a Codazzi second fundamental form if and only if one of the following sets of conditions holds: 
\begin{itemize}
\item[(i)] $\alpha=\beta=0$; 
\item[(ii)]  $\beta=\delta=0$.
\end{itemize}
\end{lemma}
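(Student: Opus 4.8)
The plan is to invoke the criterion recorded just after \eqref{eC}: a non-degenerate hypersurface has a Codazzi second fundamental form if and only if $R(X,Y)\xi = 0$ for all $X,Y$ tangent to $\Sigma$, that is, for all $\m$-valued functions $X,Y$ with $X(g),Y(g)\perp\xi(g)$. From the metric \eqref{gA4} one reads off, for $X=x_1u_1+x_2u_2+x_3u_3+x_4u_4$,
\[
\langle X,\xi\rangle = a\eta\alpha\,x_1 + (b\beta+a\gamma)x_2 + a\beta\,x_3 + \tfrac{a}{2}\delta\,x_4,
\]
whence $\langle\xi,\xi\rangle = a\eta\alpha^2 + b\beta^2 + 2a\beta\gamma + \tfrac{a}{2}\delta^2 = \varepsilon$. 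These formulas supply explicit tangent vectors, which I then feed into the curvature operators \eqref{RA4}. Throughout I use $a\neq 0$ and, since the locally symmetric case is excluded by Remark~\ref{rem:symmetric cases}, $b\neq 0$.

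For necessity the decisive point is to choose pairs that actually expose the factor $b$: the $b$-entries of \eqref{RA4} sit only in $R(u_1,u_2)$ and $R(u_2,u_4)$, so a useful pair must involve the $u_2$-direction, whereas several innocent-looking choices built from $u_1,u_3,u_4$ alone give $R(X,Y)\xi\equiv 0$. First I take $X=\delta u_1-2\eta\alpha u_4$ and $Y=(b\beta+a\gamma)u_1-a\eta\alpha u_2$, both perpendicular to $\xi$; expanding $R(X,Y)\xi$ via \eqref{RA4} and using $\eta^2=1$ produces a vector whose components are constant multiples of $b\alpha\beta\delta$, $b\alpha^2\delta$ and $b\alpha^2\beta$. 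Vanishing of these (with $b\neq 0$) forces the dichotomy $\alpha=0$ or $\beta=\delta=0$. The branch $\beta=\delta=0$ is exactly case (ii), and the normalization $a\eta\alpha^2=\varepsilon$ there forces $\alpha\neq 0$, so $\xi$ is non-null. In the branch $\alpha=0$, the vectors $u_1$ and $Y'=a\beta u_2-(b\beta+a\gamma)u_3$ are tangent, and $R(u_1,Y')\xi$ collapses to a nonzero multiple of $b\beta^2\,u_1$; hence $\beta=0$, giving $\alpha=\beta=0$, which is case (i). This exhausts necessity.

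For sufficiency I exhibit a frame of $\xi^{\perp}$ in each case and check the curvature condition directly. In case (i), $\xi=\gamma u_3+\delta u_4$ with $\tfrac{a}{2}\delta^2=\varepsilon$, and $\{u_1,\ u_3,\ \delta u_2-2\gamma u_4\}$ spans $\xi^{\perp}$; in case (ii), $\xi=\alpha u_1+\gamma u_3$ with $a\eta\alpha^2=\varepsilon$, and $\{u_3,\ u_4,\ \gamma u_1-\eta\alpha u_2\}$ spans $\xi^{\perp}$. In both cases a short computation with \eqref{RA4}, using $\alpha=\beta=0$ (respectively $\beta=\delta=0$) to annihilate the relevant entries, yields $R(V_i,V_j)\xi=0$ for every pair from the frame; all the pairwise cancellations reduce to the identity $\eta^2=1$.

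The main obstacle is the necessity direction: one must select tangent pairs whose curvature image is genuinely nonzero and carries the factor $b$, rather than cancelling identically, and then organize the case split so that the isotropic direction $u_3$ (which leaves $\gamma$ unconstrained, being null) is handled correctly by invoking $\langle\xi,\xi\rangle=\varepsilon$ to discard null normals. Once the right pairs are in hand the algebra is routine, and the choice $X=\delta u_1-2\eta\alpha u_4$, $Y=(b\beta+a\gamma)u_1-a\eta\alpha u_2$ is engineered precisely so that the $u_1$- and $u_4$-components of $R(X,Y)\xi$ isolate $b\alpha\beta\delta$ and $b\alpha^2\beta$.
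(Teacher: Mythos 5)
Your proposal is correct and follows essentially the same route as the paper: you read off the orthogonality condition from \eqref{gA4}, test the Codazzi criterion $R(X,Y)\xi=0$ on explicitly chosen tangent pairs built from the curvature matrices \eqref{RA4} (using $b\neq 0$ from Remark~\ref{rem:symmetric cases}) to force the dichotomy, and then verify sufficiency on frames that coincide with the paper's \eqref{basisA4i} and \eqref{basisA4ii}. The only difference is a harmless reordering of the case split (you obtain $\alpha=0$ or $\beta=\delta=0$ first and then kill $\beta$ in the $\alpha=0$ branch, whereas the paper first derives $\beta=0$ and then splits), and I confirmed your claimed curvature components $4b\alpha\beta\delta$, $-3b\eta\alpha^2\delta$, $-2b\eta\alpha^2\beta$ and $-4b\eta\beta^2$.
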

\begin{proof} For an arbitrary function $X=x_1u_1+x_2u_2+x_3u_3+x_4u_4$, from \eqref{gA4} we get 
\[\langle X,\xi\rangle= \eta a \alpha x_1+(a\gamma +b\beta)x_2+ a\beta x_3+\frac 12 a\delta x_4.\]
Hence,
\begin{equation}\label{nxiA4}
\langle \xi,\xi\rangle= \eta a \alpha^2 +2a\beta\gamma +b\beta^2+\frac 12 a \delta^2=\varepsilon
\end{equation}
and $X$ is orthogonal to $\xi$ if and only if 
\[\begin{array}{l}
\eta a \alpha x_1+(a\gamma +b\beta)x_2+ a\beta x_3+\frac 12 a\delta x_4=0.
\end{array}
\]
Considering the functions $X=\beta u_1 - \eta\alpha u_3$ and $Y=(a\gamma+b\beta) u_1 - \eta a\alpha u_2$ perpendicular to $\xi$. From  \eqref{RA4} we get that $R(X,Y)\xi=0$ if and only if either $\alpha=0$ or $\beta=0$.  

However, if $\alpha=0$, then $R(u_1,a\beta u_2-(a\gamma+b\beta)u_3)\xi=0$ yields again $\beta=0$. Hence, necessarily $\beta=0$. Next, taking $Z=\gamma u_1 -\eta\alpha u_2$ and $W=\delta u_1-2\eta\alpha u_4$, we find that $R(Z,W)\xi=0$ if and only if either $\alpha=0$ or $\delta=0$. This gives cases (i) and (ii) listed above.

For the converse, remark that the following functions are perpendicular to $\xi(g)$ for all $g\in G$:
\begin{align}
& \mbox{Case (i)}: & V_1=u_1, &&& V_2=\delta u_2-2\gamma u_4, && V_3=u_3, \label{basisA4i} \\
& \mbox{Case (ii)}: & V_1=u_4, &&& V_2=\gamma u_1-\eta\alpha u_2, && V_3=u_3, \label{basisA4ii} 
\end{align}
It is now sufficient to check that, in each of the three cases, $R(V_i,V_j)\xi =0$ for all $i,j \in \{1,2,3\}$.
\end{proof}
Next, we determine differential conditions on the functions $\alpha,\beta,\gamma$ and $\delta$ for hypersurfaces of a space of type \textbf{A4} and \textbf{B2} to have a Codazzi second fundamental form.

\begin{theorem}\label{codA4}
In the same notations of Lemma~{\em \ref{necA4}}, the hypersurface $\Sigma$ has Codazzi second fundamental form if and only if one of the following sets of conditions holds: 
\begin{itemize}
\item[(i)] $\alpha=\beta=0$, $a\delta^2 = 2\varepsilon$ and
\begin{equation}\label{intA4i}
V_1(\gamma)-\gamma = V_3(\gamma)= 0,
\end{equation}
where $V_1,V_2$ and $V_3$ are the functions described in \eqref{basisA4i}; 
\item[(ii)] $\beta=\delta=0$, $a \alpha^2 = \varepsilon \eta$ and
\begin{equation}\label{intA4ii}
V_1(\gamma) = V_3(\gamma) = 0,
\end{equation}
where $V_1,V_2$ and $V_3$ are the functions described in \eqref{basisA4ii}. 
\end{itemize}
\end{theorem}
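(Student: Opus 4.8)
The plan is to proceed exactly as in the proofs of Theorems~\ref{codA2} and~\ref{codA3}. By Lemma~\ref{necA4}, the normal field of a hypersurface with Codazzi second fundamental form must satisfy one of the two algebraic alternatives (i) $\alpha=\beta=0$ or (ii) $\beta=\delta=0$, and in each of these the curvature requirement $R(V_i,V_j)\xi=0$ characterising the Codazzi property already holds. What remains is to impose that $\xi$ be genuinely normal to a hypersurface, i.e.\ that the distribution $\mathrm{span}\{\xi\}^\perp=\mathrm{span}\{V_1,V_2,V_3\}$ be integrable; as recalled in the proof of Theorem~\ref{codA2}, this is equivalent to the self-adjointness of the shape operator $S\colon V\mapsto -D_V\xi$ with respect to the induced metric. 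I therefore compute $S$ in each case via $S(V)=-D_V\xi$, with $D_V\xi=\sum_i V(\xi_i)u_i+(\lambda\circ V)\xi$ and the matrices $\lambda(u_i)$ from~\eqref{lambdaA4}, and read off the symmetry relations, which should be precisely~\eqref{intA4i} and~\eqref{intA4ii}. Combined with Lemma~\ref{necA4}, this yields the claimed "if and only if".

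First I would treat case (i), where $\xi=\gamma u_3+\delta u_4$. Equation~\eqref{nxiA4} forces $a\delta^2=2\varepsilon$, so $\delta$ is a non-zero constant, and I use the frame~\eqref{basisA4i}. Substituting $\xi$ into~\eqref{lambdaA4}, the vectors $(\lambda\circ u_i)\xi$ land in $\mathrm{span}\{u_1,u_3\}$, and for $V_2=\delta u_2-2\gamma u_4$ the $u_1$-terms cancel precisely because $V_2$ was built orthogonal to $\xi$. One is left with $SV_1=(\gamma-V_1(\gamma))V_3$, $SV_2=-V_2(\gamma)V_3$ and $SV_3=-V_3(\gamma)V_3$, so every $SV_j$ is a multiple of the \emph{null} vector $V_3$. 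Since the only non-vanishing pairing of $V_3$ with the frame is $\langle V_3,V_2\rangle=a\delta\neq 0$, the relation $\langle SV_1,V_2\rangle=\langle SV_2,V_1\rangle$ gives $V_1(\gamma)-\gamma=0$ and $\langle SV_2,V_3\rangle=\langle SV_3,V_2\rangle$ gives $V_3(\gamma)=0$, which is exactly~\eqref{intA4i}; all remaining symmetry relations hold trivially.

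Case (ii) is analogous, with $\xi=\alpha u_1+\gamma u_3$. Here~\eqref{nxiA4} gives $a\alpha^2=\varepsilon\eta$, so $\alpha$ is a non-zero constant, and I use the frame~\eqref{basisA4ii}. The same computation, now observing that $\alpha\gamma u_1-\eta\alpha^2 u_2=\alpha V_2$ recombines the $\{u_1,u_2\}$-terms into a multiple of $V_2$, gives $SV_1=\alpha V_1-V_1(\gamma)V_3$, $SV_2=\alpha V_2+(\gamma^2-\frac{b\eta\alpha^2}{a}-V_2(\gamma))V_3$ and $SV_3=(\alpha-V_3(\gamma))V_3$. The non-degenerate pairing is now $\langle V_2,V_3\rangle=-a\eta\alpha\neq 0$, and imposing $\langle SV_1,V_2\rangle=\langle SV_2,V_1\rangle$ and $\langle SV_2,V_3\rangle=\langle SV_3,V_2\rangle$ produces $V_1(\gamma)=0$ and $V_3(\gamma)=0$, i.e.~\eqref{intA4ii}; note that $V_2(\gamma)$ remains unconstrained, as it enters only the diagonal entry $\langle SV_2,V_2\rangle$.

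There is no genuine conceptual obstacle: the argument rests on the equivalence between integrability of $\xi^\perp$ and self-adjointness of $S$, together with the structural feature that $V_3$ is null and that the engineered orthogonality of $V_2$ to $\xi$ forces the cross-terms in $(\lambda\circ V)\xi$ to cancel. The main effort is the bookkeeping of the $4\times 4$ matrix products $(\lambda\circ u_i)\xi$ and of which pairings $\langle SV_i,V_j\rangle$ are non-trivial; because $V_3$ is null, only its pairing with a single frame vector contributes, which is exactly what makes the resulting conditions so short.
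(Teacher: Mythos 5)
Your proposal is correct and follows essentially the same route as the paper: compute the shape operator $S(V)=-D_V\xi$ in the adapted frames \eqref{basisA4i} and \eqref{basisA4ii} and impose its self-adjointness, which by the nullity of $V_3$ reduces to exactly the pairings you identify and yields \eqref{intA4i} and \eqref{intA4ii}. (Your expression $SV_1=(\gamma-V_1(\gamma))V_3$ in case (i) is the one consistent with the stated conditions; the paper's displayed formula writes $V_1(\alpha)$ there, which is vacuous since $\alpha\equiv 0$.)
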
 

\begin{proof}
We treat the three cases listed in Lemma~\ref{necA4} separately. 

\smallskip
{\em Case }(i). In this case, \eqref{nxiA4} implies $a\delta^2=2\varepsilon$, so that $\delta \neq 0$ is constant. Using \eqref{basisA4i} and \eqref{lambdaA4}, we calculate the shape operator $S$ and we find
\begin{equation}\label{sA4i}
\left\{
\begin{array}{l}
SV_1 =  -(V_1(\alpha)+\gamma) V_3, \\[2 pt]
SV_2 =  -V_2(\gamma) V_3, \\[2 pt]
SV_3 =  -V_3(\gamma)V_3.
\end{array}
\right.
\end{equation}
%
%
%
%
We then use \eqref{basisA4i} and \eqref{gA4} to decide when $S$ is self-adjoint and we find that this holds if and only if \eqref{intA4i} is satisfied. 

\smallskip
{\em Case} (ii). We now have $\beta=\delta=0$. Thus, $a\alpha ^2 =\varepsilon \eta$ and so, $\alpha$ is constant and non-zero. Using \eqref{basisA4ii} and \eqref{lambdaA4}, we calculate the shape operator $S$ and we find
\begin{equation}\label{sA4ii}
\left\{
\begin{array}{l}
SV_1 =  \alpha V_1 -V_1(\gamma) V_3, \\[2 pt]
SV_2 =  \alpha V_2 + \left(-V_2(\gamma)+\gamma^2-\frac{\eta\alpha^2 b }{a}\right) V_3, \\[2 pt]
SV_3 =  -(V_3(\gamma)-\alpha)V_3.
\end{array}
\right.
\end{equation}
Hence, $S$ is self-adjoint if and only if equation \eqref{intA4ii} is satisfied.
This completes case (ii). 
\end{proof}

With regard to parallel and totally geodesic hypersurfaces of a non-reductive homogeneous pseudo-Riemannian space of type \textbf{A4} and \textbf{B2}, we obtain the following.

\begin{theorem}\label{parA4}
Let $(M,g)$ be a non-reductive homogeneous pseudo-Riemannian four-manifold of type {\rm\bf A4} or {\rm\bf B2} and $F : \Sigma \to M$ a hypersurface of $M$. Denote by $\xi=\alpha u_1+\beta u_2+\gamma u_3+\delta u_4$ a vector field normal to $\Sigma$, with $\langle \xi ,\xi \rangle=\varepsilon=\pm 1$. Then, $\Sigma$ is a parallel hypersurface if and only if
\begin{itemize}
\item[(1)] $\alpha=\beta=0$, $a\delta^2 = 2\varepsilon$ is constant and 
\begin{equation}\label{parA4i}
\begin{array}{l}
V_1(\gamma)-\gamma=V_2(\gamma)=V_3(\gamma)=0,
\end{array}
\end{equation}
where $V_1,V_2$ and $V_3$ are the functions described in \eqref{basisA4i}. In particular, these hypersurfaces are always totally geodesic. 
\item[(2)] $\beta = \delta= 0$, $a\alpha^2 = \varepsilon \eta$ is constant  and
\begin{equation}\label{parA4ii}
V_1(\gamma) = V_3(\gamma) = V_2(\varphi) = 0,
\end{equation}
where $\varphi = -V_2(\gamma) + \gamma^2-\frac{\alpha^2b}{a}\eta$ and $V_1,V_2$ and $V_3$ are the functions described in \eqref{basisA4ii}. Moreover, these hypersurfaces are always proper parallel. 
\end{itemize}
\end{theorem}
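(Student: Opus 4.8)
The plan is to run the same two-step strategy used for types \textbf{A2} and \textbf{A3}: first pin down the shape operator and the induced Levi-Civita connection $\nabla^\Sigma$ on the adapted frame $\{V_1,V_2,V_3\}$ spanning $\xi^\perp$ in each of the two Codazzi cases of Theorem~\ref{codA4}, and then impose the parallelism condition $\nabla^\Sigma_{V_i}(SV_j)=S(\nabla^\Sigma_{V_i}V_j)$ for all $i,j$, reading off the extra differential equations it produces on top of \eqref{intA4i} and \eqref{intA4ii}. Since these properties are local, I would compute every covariant derivative with the absolute derivative $D$ attached to the data \eqref{lambdaA4}, projecting $D_{V_i}V_j$ back onto $\mathrm{span}\{V_1,V_2,V_3\}$ along $\xi$ by means of the metric \eqref{gA4}; this is purely algebraic once the component functions are treated as unknowns and only their derivatives $V_i(\gamma)$ (and later $V_i(\varphi)$) survive.

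For case (i) I start from \eqref{sA4i} and feed in the Codazzi relations \eqref{intA4i}, i.e. $V_1(\gamma)=\gamma$ and $V_3(\gamma)=0$ (recall that $\delta$ is a nonzero constant). The shape operator then collapses to $SV_1=SV_3=0$ and $SV_2=-V_2(\gamma)\,V_3$, so its whole image lies in the null line $\mathrm{span}\{V_3\}$, with $V_3$ itself annihilated. Writing out $\nabla^\Sigma_{V_i}(SV_j)-S(\nabla^\Sigma_{V_i}V_j)$, I expect the off-diagonal comparison to differentiate the single surviving coefficient and to force $V_2(\gamma)=0$; this kills $S$ entirely, which is exactly why these hypersurfaces are always totally geodesic. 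Thus case (i) yields the conditions in (1), and once $S=0$ the verification that $V_1(\gamma)=\gamma$, $V_2(\gamma)=V_3(\gamma)=0$ is also sufficient is immediate.

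For case (ii) I substitute the Codazzi relations \eqref{intA4ii}, namely $V_1(\gamma)=V_3(\gamma)=0$, into \eqref{sA4ii} (here $\alpha$ is a nonzero constant). This puts the shape operator into the convenient form $S=\alpha\,\mathrm{Id}+N$, where $N$ is the nilpotent endomorphism sending $V_2\mapsto\varphi\,V_3$ and killing $V_1,V_3$, with $\varphi=-V_2(\gamma)+\gamma^2-\tfrac{\alpha^2 b}{a}\eta$. Because $\alpha$ is constant, $\alpha\,\mathrm{Id}$ is automatically $\nabla^\Sigma$-parallel, so parallelism of $S$ is equivalent to parallelism of $N$. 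Computing $\nabla^\Sigma_{V_i}(NV_j)-N(\nabla^\Sigma_{V_i}V_j)$, I anticipate that all components vanish except the one that differentiates $\varphi$, leaving precisely $V_2(\varphi)=0$. Since $\alpha\neq0$, the summand $\alpha\,\mathrm{Id}$ can never be removed, so $S\neq0$ and these hypersurfaces are always proper parallel, giving (2).

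The routine but delicate part is the bookkeeping of the induced connection: $V_3$ is lightlike in case (i) and the frame is not orthonormal, so the projection of $D_{V_i}V_j$ along $\xi$ must be performed against the (indefinite, partly null) Gram matrix of $\{V_1,V_2,V_3\}$ coming from \eqref{gA4}, and one has to track which Christoffel-type coefficients carry derivatives of $\gamma$. I expect the main obstacle to be exactly this: arranging the computation so that, after using \eqref{intA4i} respectively \eqref{intA4ii}, the parallelism system reduces to the single clean scalar equation ($V_2(\gamma)=0$ in case (i), $V_2(\varphi)=0$ in case (ii)) rather than a tangle of higher-order relations. Once this reduction is carried out, sufficiency in both cases is checked by direct substitution back into $\nabla^\Sigma_{V_i}(SV_j)=S(\nabla^\Sigma_{V_i}V_j)$.
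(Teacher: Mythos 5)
Your proposal follows essentially the same route as the paper: in each of the two Codazzi cases one writes $S$ and the induced connection $\nabla^\Sigma$ in the adapted frame $\{V_1,V_2,V_3\}$ and imposes $\nabla^\Sigma_{V_i}(SV_j)=S(\nabla^\Sigma_{V_i}V_j)$, finding in case (i) that parallelism forces $V_2(\gamma)=0$ and hence $S=0$, and in case (ii) that it reduces to $V_2(\varphi)=0$ with $S=\alpha\,\mathrm{Id}+N\neq 0$, so the hypersurface is proper parallel. The one step your sketch glosses over is that in case (i) the raw equation is $V_1(V_2(\gamma))-2V_2(\gamma)=0$, which only collapses to $V_2(\gamma)=0$ after commuting $V_1$ and $V_2$ and using $V_1(\gamma)=\gamma$ (and, in case (ii), one must still check that $V_1(\varphi)=V_3(\varphi)=0$ follow from the Codazzi relations); both points are handled exactly this way in the paper, so your plan goes through.
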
 

\begin{proof}
We start from cases (i), (ii) listed in Theorem~\ref{codA4}. In case (i), the description of the shape operator we gave within the proof of Theorem~\ref{codA4} implies that $S=0$ if and only if $V_2(\gamma)=0$. From \eqref{basisA4i} and \eqref{lambdaA4} we find that the possibly nonvanishing covariant derivatives $\nabla^\Sigma _{V_i} V_j$ are given by
$$
\begin{array}{ll}
\nabla^\Sigma _{V_1} V_2 = V_2 -\frac{b\delta}{a}V_3, \qquad &
\nabla^\Sigma _{V_1} V_3 = \nabla^\Sigma _{V_3} V_1 = -V_3 , \\[6 pt]
\nabla^\Sigma _{V_2} V_1 = -V_2 - \frac{b\delta}{a}V_3,\qquad &
\nabla^\Sigma _{V_2} V_2 = \left(\frac{2\gamma^2}{\varepsilon} + \frac{2 \delta^2 b}{a\varepsilon}\right)	V_1 + \left(\frac{2V_2(\gamma)\gamma}{\delta}\right)V_3, \\[6 pt]
\nabla^\Sigma _{V_2} V_3 = \nabla^\Sigma _{V_3} V_2= \frac{\delta}{\varepsilon}V_1. \qquad &
\end{array}
$$
If $\Sigma$ is parallel then
\begin{align*}
0 &= S(\nabla^\Sigma_{V_1}V_2) - \nabla^\Sigma_{V_1}S(V_2)= -V_2(\gamma)V_3 - (-V_1(V_2(\gamma)) +V_2(\gamma))V_3\\
  &= (V_1(V_2(\gamma))-2V_2(\gamma))V_3 = (V_2(V_1(\gamma)) + [V_1,V_2](\gamma) - 2V_2(\gamma))V_3\\
  &=V_2(\gamma)V_3.
\end{align*}
From \eqref{sA4i} we conclude that $\Sigma$ is parallel if and only if $\Sigma$ is totally geodesic.

With regard to case (ii), using \eqref{basisA4ii} and \eqref{lambdaA4} we get that the possibly nonvanishing covariant derivatives $\nabla^\Sigma _{V_i} V_j$ are given by
\[
\begin{array}{l}
\nabla^\Sigma _{V_1} V_1 = -\frac{\gamma\eta}{2\alpha} V_3, \\[4 pt]
\nabla^\Sigma _{V_1} V_2 = -\gamma V_1, \\[4 pt]
\nabla^\Sigma _{V_2} V_2 = -\frac{V_2(\gamma)\gamma}{\alpha}V_3.
\end{array}
\]
The shape operator in \eqref{sA4ii} is now given by
\[
\left\{
\begin{array}{l}
SV_1 =  \alpha V_1, \\[2 pt]
SV_2 =  \alpha V_2 + \varphi V_3, \\[2 pt]
SV_3 =  \alpha V_3,
\end{array}
\right.
\]
where $\varphi  = -V_2(\gamma) + \gamma^2-\frac{\alpha^2b}{a}\eta$. Then, $\nabla^{\Sigma} _{V_i}  SV_j=S \nabla^{\Sigma} _{V_i}  V_j$ holds if and only if 
$V_1(\varphi) = V_2(\varphi) = V_3(\varphi) = 0$.
It is easy to check that $V_1(\varphi) = V_3(\varphi) = 0$ already follow from $V_1(\gamma)=V_3(\gamma) = 0$. Hence, case 2 follows.
\end{proof}

\begin{example}
	Suppose $\alpha,\beta,\gamma$ and $\delta$ are constant functions. Then we find the following parallel hypersurfaces.
	\begin{itemize}
		\item If $\alpha =\beta =\gamma= 0$, $a\delta^2=2\varepsilon$, then the image of the constant functions $V_1,V_2$ and 
		$V_3$ spans a subalgebra $\mathfrak{k}_1 \cong \mathbb{R} \times \mathfrak{s}(2)$ of $\g$. Thus, under these assumptions we find a Lorentzian parallel hypersurface, which is an orbit of a Lorentzian Lie subgroup 
$K_1 \subseteq G$ with Lie subalgebra $\mathfrak{k}_1$ and satisfies conditions (1).
		\item If $\beta=\delta=0$ and $a\alpha^2=\varepsilon\eta$, then the image of the constant functions $V_1,V_2$ and $V_3$ spans a subalgebra $\mathfrak{k}_2 \cong \mathbb{R} \times \mathfrak{s}(2)$ of $\g$. So, under these assumptions we find a Lorentzian parallel hypersurface, which is an orbit of a Lorentzian Lie subgroup $K_2 \subseteq G$ with Lie subalgebra $\mathfrak{k}_2$ and satisfies conditions (2).
	\end{itemize}
\end{example}



\section{Hypersurfaces of homogeneous spaces of type \textbf{B1}}
\setcounter{equation}{0}

\begin{lemma} \label{necB1}
Let $(M,g)$ be a homogeneous pseudo-Riemannian four-manifold of type {\rm\bf B1} with metric~\eqref{gA1}. If there exists a non-degenerate hypersurface $F : \Sigma \to M$ with Codazzi second fundamental form, then $b=c=d=0$.
\end{lemma}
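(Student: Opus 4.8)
The plan is to follow the template of Lemma~\ref{necA1}. By \eqref{eC} and the remark immediately after it, $\Sigma$ has a Codazzi second fundamental form precisely when $R(X,Y)\xi=0$ for all $X,Y$ tangent to $\Sigma$, that is, for all functions $X,Y:G\to\m$ whose values lie in $\xi^{\perp}$. First I would read off the orthogonality data from \eqref{gB1}: with $\xi=\alpha u_1+\beta u_2+\gamma u_3+\delta u_4$ and $X=x_1u_1+x_2u_2+x_3u_3+x_4u_4$,
\[
\langle X,\xi\rangle=a\gamma\,x_1+(b\beta+c\gamma+a\delta)\,x_2+(a\alpha+c\beta+d\gamma)\,x_3+a\beta\,x_4,
\]
and in particular $\langle\xi,\xi\rangle=2a\alpha\gamma+2a\beta\delta+b\beta^2+2c\beta\gamma+d\gamma^2=\varepsilon$. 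Since $\varepsilon\neq0$ and $a\neq0$, the linear form $\langle\cdot,\xi\rangle$ is nonzero, so $\xi^{\perp}$ is genuinely the $3$-dimensional tangent space and at least one of its four coefficients above does not vanish.

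Next I would build an explicit basis $\{V_1,V_2,V_3\}$ of $\xi^{\perp}$ by eliminating one coordinate against a nonzero coefficient of the form above; the choice of pivot forces a case distinction according to whether $\beta$ (equivalently the coefficient $a\beta$ of $x_4$) vanishes. In the generic case $\beta\neq0$ one may take
\[
V_1=\beta u_1-\gamma u_4,\qquad V_2=a\beta u_2-(b\beta+c\gamma+a\delta)u_4,\qquad V_3=a\beta u_3-(a\alpha+c\beta+d\gamma)u_4,
\]
whereas if $\beta=0$ the normalization forces $\gamma\neq0$, the vector $u_4$ is itself tangent, and one pivots instead on the coefficient $a\gamma$ of $x_1$. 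In either case the Codazzi condition reduces to the three vector equations $R(V_1,V_2)\xi=R(V_1,V_3)\xi=R(V_2,V_3)\xi=0$, which I would evaluate by substituting the $V_i$ into the six curvature operators listed in \eqref{RB1}.

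The conclusion should then be extracted hierarchically from the shape of \eqref{RB1}. The operators $R(u_1,u_4)$ and $R(u_3,u_4)$ carry only the factors $d/(2a)$ and $d^2/(4a^2)$, and more generally the $d$-linear parts are the least entangled, so I expect the resulting equations to force $d=0$ first. Substituting $d=0$ collapses the couplings $bd-c^2$, $3c^2-2bd$, $9c^2-10bd$ and $22bd-15c^2$ appearing in the remaining blocks to pure powers of $c$, and these should next give $c=0$, after which the surviving $b$-entries of $R(u_1,u_2)$ and $R(u_2,u_3)$ yield $b=0$. The degenerate subcases of the split are closed using $a\neq0$ and $\varepsilon\neq0$ to rule out a Codazzi hypersurface with $(b,c,d)\neq(0,0,0)$.

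I expect the main obstacle to be computational rather than conceptual: the type~\textbf{B1} curvature tensor \eqref{RB1} is by far the most intricate of the six types, with entries bearing denominators $a$, $a^2$, $a^3$ and the tightly coupled numerators listed above. Because these combinations entangle $b$, $c$ and $d$, the three conditions cannot be read off independently, and the delicate point is to order the eliminations ($d$, then $c$, then $b$) so that each vanishing genuinely simplifies the later equations, while checking that every branch of the case analysis on the components of $\xi$ terminates in the same conclusion $b=c=d=0$ --- which, by Remark~\ref{rem:symmetric cases}, is exactly the constant-curvature locus on which Codazzi hypersurfaces do exist.
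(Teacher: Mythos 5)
Your proposal follows essentially the same route as the paper: the same expression for $\langle X,\xi\rangle$, the identical tangent vectors $\beta u_1-\gamma u_4$, $a\beta u_2-(b\beta+c\gamma+a\delta)u_4$, $a\beta u_3-(a\alpha+c\beta+d\gamma)u_4$ in the case $\beta\neq 0$, the same pivot on $a\gamma$ (with $u_4$ tangent) when $\beta=0$, and the same elimination order ($d$, then $c$, then $b$) that the paper's computation in fact realizes. The only thing left unexecuted is the substitution into \eqref{RB1}, which is the mechanical step the paper carries out and which confirms your predicted outcome in both branches.
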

\begin{proof} By equation \eqref{gB1}, for an arbitrary function $X=x_1u_1+x_2u_2+x_3u_3+x_4u_4$, we have 
\[
\langle X,\xi\rangle=a\gamma x_1+(a \delta +b\beta +c\gamma ) x_2+(a \alpha +c\beta +d\gamma )x_3+a \beta x_4.
\]
Hence,
\begin{equation*}\label{nxiB1}
\langle \xi,\xi\rangle=b\beta^2 +2a\alpha \gamma +2a\beta\delta +2c\beta \gamma +d\gamma^2 = \varepsilon
\end{equation*}
and $X$ is perpendicular to $\xi$ if and only if 
\[
a\gamma x_1+(a \delta +b\beta +c\gamma ) x_2+(a \alpha +c\beta +d\gamma )x_3+a \beta x_4=0.
\]

First suppose that $\beta \neq 0$ then $ X = \beta u_1 - \gamma u_4,\ Y = a\beta u_2 -(a\delta + b\beta +c\gamma)u_4$ and $Z = a \beta u_3 - (a\alpha + c\beta +d\gamma)u_4$ are perpendicular to $\xi$. From \eqref{RB1} we obtain $R(X,Z)\xi = 0$ if and only if $c=d=0$. This then implies $R(X,Y)\xi = 0$ if and only if $b=0$. 

Assume now that $\beta = 0$. In this case, $\langle \xi, \xi\rangle =\gamma(2a\alpha+d\gamma)$ implies $\gamma\neq 0$. Let $X = (a\alpha+d\gamma)u_1 -a\gamma u_3 ,\  Y = (a\delta+c\gamma)u_1-a\gamma u_2$ and $Z = u_4$. These functions span a basis of $\xi^\perp(g)$ for every $g\in G$. In this case $R(Y,Z)\xi = 0$ implies $d=0$. Then $\left\langle\xi,\xi\right\rangle = 2 a \alpha\gamma$ implies that also $\alpha \neq 0$. For $\alpha\neq 0$ we have $R(X,Z)\xi = R(Y,Z)\xi = 0$ if and only if $c=d=0$. The equation $R(X,Y)\xi = 0$ now implies $b=0$. 
\end{proof}
By Remark~\ref{rem:symmetric cases}, a homogeneous space of type \textbf{B1} with $b=c=d=0$ has constant sectional curvature. In particular, it is not non-reductive and hence we have the following.
\begin{theorem}
	A non-reductive homogeneous pseudo-Riemannian four-manifold of type {\rm\bf B1} does not allow hypersurfaces with Codazzi second fundamental form. In particular, it does not allow parallel and totally geodesic hypersurfaces.
\end{theorem}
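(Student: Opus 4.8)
The plan is to mirror exactly the strategy already used successfully for type \textbf{A1} in Lemma~\ref{necA1}: reduce the Codazzi condition to the purely pointwise requirement that $R(X,Y)\xi=0$ for all $X,Y$ perpendicular to $\xi$, then exploit the explicit curvature matrices \eqref{RB1} to force the metric parameters $b$, $c$ and $d$ to vanish. First I would expand $\langle X,\xi\rangle$ using the metric \eqref{gB1} to read off, for $X=\sum x_i u_i$, the linear orthogonality relation $a\gamma x_1+(a\delta+b\beta+c\gamma)x_2+(a\alpha+c\beta+d\gamma)x_3+a\beta x_4=0$, and correspondingly the norm constraint $b\beta^2+2a\alpha\gamma+2a\beta\delta+2c\beta\gamma+d\gamma^2=\varepsilon$. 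Since $a\neq 0$ throughout, these let me manufacture explicit vector fields in $\xi^{\perp}$ with coefficients built from $a,b,c,d$ and the component functions $\alpha,\beta,\gamma,\delta$.

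The argument then splits on whether $\beta$ vanishes. If $\beta\neq 0$, I would take the three orthogonal test fields $X=\beta u_1-\gamma u_4$, $Y=a\beta u_2-(a\delta+b\beta+c\gamma)u_4$ and $Z=a\beta u_3-(a\alpha+c\beta+d\gamma)u_4$; plugging $X$ and $Z$ into \eqref{RB1} and demanding $R(X,Z)\xi=0$ should collapse to $c=d=0$, after which imposing $R(X,Y)\xi=0$ forces $b=0$. If instead $\beta=0$, the norm reduces to $\gamma(2a\alpha+d\gamma)=\varepsilon$, so $\gamma\neq 0$; here I would use $X=(a\alpha+d\gamma)u_1-a\gamma u_3$, $Y=(a\delta+c\gamma)u_1-a\gamma u_2$ and $Z=u_4$ as an orthogonal frame. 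Evaluating $R(Y,Z)\xi=0$ should give $d=0$, which in turn forces $\alpha\neq 0$ from the norm equation, and then the remaining vanishing conditions $R(X,Z)\xi=R(Y,Z)\xi=0$ yield $c=d=0$ while $R(X,Y)\xi=0$ yields $b=0$. In every branch the conclusion is $b=c=d=0$.

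The main obstacle is purely computational bookkeeping rather than conceptual: I must apply the six curvature operators $R(u_i,u_j)$ from \eqref{RB1} to the chosen bilinear combinations and verify that the resulting vectors vanish precisely on the claimed parameter sets, being careful that the constructed fields genuinely lie in $\xi^{\perp}$ for all $g\in G$ (this is guaranteed by the linear orthogonality relation, since the constructions only use the metric coefficients and not derivatives). The delicate point is choosing the test fields so that each curvature evaluation isolates one parameter at a time, so that the implications chain cleanly to $c=d=0$ first and $b=0$ last; the explicit forms above are designed for exactly this. Once $b=c=d=0$ is established, I invoke Remark~\ref{rem:symmetric cases}, which states that type \textbf{B1} has constant sectional curvature precisely when $b=c=d=0$; such a space is locally symmetric and in particular reductive, contradicting the non-reductive hypothesis. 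Therefore no Codazzi (hence no parallel or totally geodesic) hypersurface can exist, which is the claim.
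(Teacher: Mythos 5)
Your proposal reproduces the paper's own argument for this theorem essentially verbatim: the same split on $\beta\neq 0$ versus $\beta=0$, the same test fields $X$, $Y$, $Z$ in $\xi^{\perp}$ in each branch, the same chain of curvature conditions forcing $c=d=0$ and then $b=0$, and the same appeal to Remark~\ref{rem:symmetric cases} to conclude that $b=c=d=0$ puts the space outside the non-reductive class. The approach is correct and matches the paper.
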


\bibliographystyle{amsalpha_abbrv}
\bibliography{./ParHyp4Dnotred.bib}

\providecommand{\bysame}{\leavevmode\hbox to3em{\hrulefill}\thinspace}
\providecommand{\MR}{\relax\ifhmode\unskip\space\fi MR }
\providecommand{\MRhref}[2]{%
  \href{http://www.ams.org/mathscinet-getitem?mr=#1}{#2}
}
\providecommand{\href}[2]{#2}
\begin{thebibliography}{CLGRGRVL17}

\bibitem[CF12]{CF}
G.~Calvaruso and A.~Fino, \emph{{R}icci solitons and geometry of non-reductive
  homogeneous 4-spaces}, Can. J. Math. \textbf{64} (2012), 778--804.

\bibitem[CFZ15]{CFZ}
G.~Calvaruso, A.~Fino, and A.~Zaeim, \emph{Homogeneous geodesics of
  non-reductive homogeneous pseudo-{R}iemannian 4-manifolds}, Bull. Braz. Math.
  Soc. \textbf{46} (2015), no.~1, 23--64.

\bibitem[CLGRGRVL17]{GRR}
E.~Calvino-Louzao, E.~Garcia-Rio, I.~Gutierrez-Rodriguez, and
  R.~Vazquez-Lorenzo, \emph{Conformal geometry of non-reductive
  four-dimensional homogeneous spaces}, Math. Nachr. \textbf{290} (2017),
  1470--1490.

\bibitem[CVdV09]{CVdV2}
G.~Calvaruso and J.~Van~der Veken, \emph{{L}orentzian symmetric three-spaces
  and the classification of their parallel surfaces}, Internat. J. Math.
  \textbf{20} (2009), no.~10, 1185--1205.

\bibitem[CVdV10a]{CVdV4}
G.~Calvaruso and J.~Van~der Veken, \emph{Parallel surfaces in {L}orentzian
  three-manifolds admitting a parallel null vector field}, J. Phys. A: Math.
  Theor. \textbf{43} (2010), no.~32, 325207.

\bibitem[CVdV10b]{CVdV1}
G.~Calvaruso and J.~Van~der Veken, \emph{Parallel surfaces in three-dimensional
  {L}orentzian {L}ie groups}, Taiwanese J. Math. \textbf{14} (2010), no.~1,
  223--250.

\bibitem[CVdV13]{CVdV5}
G.~Calvaruso and J.~Van~der Veken, \emph{Parallel surfaces in three-dimensional
  reducible spaces}, Proc. Roy. Soc. Edinburgh Sect. A \textbf{143} (2013),
  no.~3, 483–491.

\bibitem[CZ14a]{CZ2}
G.~Calvaruso and A.~Zaeim, \emph{A complete classification of {R}icci and
  {Y}amabe solitons of non-reductive homogeneous 4-spaces}, J. Geom. Phys.
  \textbf{80} (2014), 15--25.

\bibitem[CZ14b]{CZ}
G.~Calvaruso and A.~Zaeim, \emph{Geometric structures over non-reductive
  homogeneous 4-spaces}, Adv. Geom. \textbf{14} (2014), no.~2, 191--214.

\bibitem[CZ15]{CZ3}
G.~Calvaruso and A.~Zaeim, \emph{Invariant symmetries on non-reductive
  homogeneous pseudo-{R}iemannian four-manifolds}, Rev. Mat. Complut.
  \textbf{28} (2015), 599--622.

\bibitem[DLVdV12]{DdV}
B.~De~Leo and J.~Van~der Veken, \emph{Totally geodesic hypersurfaces of
  four-dimensional generalized symmetric spaces}, Geom. Dedicata \textbf{159}
  (2012), no.~1, 373--387.

\bibitem[FR06]{FR}
M.~E. Fels and A.~G. Renner, \emph{Non-reductive homogeneous
  pseudo-{R}iemannian manifolds of dimension four}, Can. J. Math. \textbf{58}
  (2006), no.~2, 282--311.

\bibitem[KN63]{KobayashiNomizu1963}
S.~Kobayashi and K.~Nomizu, \emph{Foundations of differential geometry. {V}ol
  {I}}, Interscience Publishers, a division of John Wiley \& Sons, New
  York-London, 1963. \MR{0152974}

\bibitem[KN69]{KobayashiNomizu1969}
S.~Kobayashi and K.~Nomizu, \emph{Foundations of differential geometry. {V}ol.
  {II}}, Interscience Tracts in Pure and Applied Mathematics, No. 15 Vol. II,
  Interscience Publishers John Wiley \& Sons, Inc., New York-London-Sydney,
  1969. \MR{0238225}

\bibitem[PSWZ76]{PSW}
J.~Patera, R.~T. Sharp, P.~Winternitz, and H.~Zassenhaus, \emph{Invariants of
  real low dimension {L}ie algebras}, J. Math. Phys. \textbf{17} (1976), no.~6,
  986--994.

\bibitem[The09]{The}
D.~The, \emph{Invariant {Y}ang{\textendash}{M}ills connections over
  non-reductive pseudo-{R}iemannian homogeneous spaces}, Trans. Amer. Math.
  Soc. \textbf{361} (2009), no.~07, 3879--3879.

\end{thebibliography}

\end{document}